\newtheorem{thm}{Theorem}[section]
\newtheorem{lem}[thm]{Lemma}
\newtheorem{cor}[thm]{Corollary}
\newtheorem{prop}[thm]{Proposition}
\theoremstyle{definition} 
\newtheorem{defn}[thm]{Definition}
\newtheorem{ex}[thm]{Example}
\newtheorem{q}[thm]{Question}
\newtheorem{rem}[thm]{Remark}
\numberwithin{equation}{section}
\renewcommand{\theenumi}{\roman{enumi}}
\newcommand{\secref}[1]{Section~\textup{\ref{#1}}}
\newcommand{\subsecref}[1]{Subsection~\textup{\ref{#1}}}
\newcommand{\thmref}[1]{Theorem~\textup{\ref{#1}}}
\newcommand{\corref}[1]{Corollary~\textup{\ref{#1}}}
\newcommand{\lemref}[1]{Lemma~\textup{\ref{#1}}}
\newcommand{\propref}[1]{Proposition~\textup{\ref{#1}}}
\newcommand{\defnref}[1]{Definition~\textup{\ref{#1}}}
\newcommand{\remref}[1]{Remark~\textup{\ref{#1}}}
\newcommand{\exref}[1]{Example~\textup{\ref{#1}}}
\newcommand{\KK}{\mathcal K}
\newcommand{\LL}{\mathcal L}
\newcommand{\OO}{\mathcal O}
\newcommand{\TT}{\mathcal T}
\newcommand{\N}{\mathbb N}
\newcommand{\Z}{\mathbb Z}
\newcommand{\C}{\mathbb C}
\newcommand{\T}{\mathbb T}
\newcommand{\then}{\ensuremath{\Rightarrow}}
\renewcommand{\iff}{\ensuremath{\Leftrightarrow}}
\renewcommand{\bar}{\overline}
\newcommand{\wilde}{\widetilde}
\newcommand{\inv}{^{-1}}
\newcommand{\<}{\langle}
\renewcommand{\>}{\rangle}
\newcommand{\ann}{^\perp}
\newcommand{\Chi}{\raisebox{2pt}{\ensuremath{\chi}}}
\renewcommand{\epsilon}{\varepsilon}
\newcommand{\case}{& \text{if }}
\newcommand{\ifnot}{& \text{otherwise}}
\newcommand{\minus}{\setminus}
\newcommand{\act}{\curvearrowright}
\renewcommand{\:}{\colon}
\renewcommand{\)}{\textup)}
\newcommand{\id}{\text{\textup{id}}}
\DeclareMathOperator{\ad}{Ad}
\DeclareMathOperator*{\spn}{span}
\DeclareMathOperator*{\clspn}{\overline{\spn}}
\newcommand{\eplist}{\renewcommand{\labelenumi}{(\textbf{\eplabel} \theenumi)}}
\newcommand{\eplabel}{\textbf{EP}}
\newcommand{\ep}[1]{\textup{(\eplabel\ {\ref{#1}})}}
\newcommand{\epprimelist}{\renewcommand{\labelenumi}{(\textbf{\epprimelabel} \theenumi)}}
\newcommand{\epprimelabel}{\textbf{EP'}}
\newcommand{\midtext}[1]{\quad\text{#1}\quad}
\newcommand{\righttext}[1]{\quad\text{#1 }}
\newcommand{\yphi}{Y^\varphi}
\newcommand{\ty}{\TT_{\yphi}}
\newcommand{\oy}{\OO_{\yphi}}
\newcommand{\rg}{\textup{rg}}
\newcommand{\so}{\textup{so}}
\begin{document}
\title{On Exel-Pardo algebras}
\author{Erik B\'edos}
\address{Institute of Mathematics, University of Oslo, PB 1053 Blindern, 0316 Oslo, Norway}
\email{bedos@math.uio.no}
\author{S. Kaliszewski}
\address{School of Mathematical and Statistical Sciences, Arizona State University, Tempe, AZ 85287}
\email{kaliszewski@asu.edu}
\author{John Quigg}
\address{School of Mathematical and Statistical Sciences, Arizona State University, Tempe, AZ 85287}
\email{quigg@asu.edu}
\date{October 27, 2017}

\subjclass[2000]{Primary 46L55; Secondary 46L08}
\keywords{groups, graphs, cocycles, self-similarity, $C^*$-correspondences, Toeplitz algebras, Cuntz-Pimsner algebras}

\begin{abstract}
We generalize a recent construction of Exel and Pardo,
from discrete groups acting on finite directed graphs
to locally compact groups acting on topological graphs.
To each cocycle for such an action, we construct a $C^*$-correspondence
whose associated Cuntz-Pimsner algebra is the analog of the Exel-Pardo $C^*$-algebra.
\end{abstract}

\maketitle

\section{Introduction}
\label{intro}

Let $E$ denote a directed graph with vertex set $E^0$ and edge set $E^1$,
as in \cite[Section 5]{Rae}.
When $E$ is finite with no sources, Exel and Pardo 
have shown in \cite{EP} how to attach a Cuntz-Krieger-like $C^*$-algebra 
$\OO_{G,E}$
to an action 
of a countable discrete group $G$ on $E$, equipped with a cocycle $\varphi$ from $ G \times E^1$ into $G$ that is compatible with the action of $G$ on $E^0$. This set-up is powerful as it encompasses the $C^*$-algebras $\OO_{(G,X)}$ of self-similar groups 
introduced by Nekrashevych \cite{Nek09}
and the Katsura's algebras $\OO_{A,B}$ associated with two $N\times N$ integer matrices $A$ and $B$ in \cite{Kat08}. Our aim with this paper is to generalize Exel and Pardo's construction, allowing $G$ to be uncountable and $E$ to be infinite, possibly with sources.
In fact, we develop the basic construction of the $C^*$-algebra at a significantly greater level of generality: we start with a locally compact group $G$ acting on a topological graph $E$ and a  cocycle $\varphi$ for this action.

The main idea is that there is a natural way to associate to the given data $(E, G, \varphi)$ a $C^*$-correspondence $\yphi$ 
over the crossed product $C_0(E^0) \rtimes G$, and the Cuntz-Pimsner algebra associated with this correspondence provides the desired algebra. In \cite[Section 10]{EP}, Exel and Pardo also give a description of $\OO_{G,E}$ (in the case they consider) as a Cuntz-Pimsner algebra, but our approach has an interesting conceptual feature, besides that it works without any restriction on $G$ and $E$. Considering first the case where $\varphi$ is the ``trivial'' cocycle, that sends $(g, e)$ to $g$ for every $g\in G,  e \in E^1$, 
our correspondence $\yphi$ reduces to 
the crossed product $X\rtimes G$ of the graph correspondence $X=X_E$ by the action of $G$ 
 naturally associated with the action of $G$ on $E$. 
For a general $\varphi$, the correspondence $\yphi$ is equal to $X\rtimes G$ as a right Hilbert $(C_0(E_0)\rtimes G)$-module, but the cocycle $\varphi$ is used to deform the left action of $C_0(E_0)\rtimes G$ on $X\rtimes G$. 

The paper is organized as follows. In Section 2 we review some facts about cocycles for actions of locally compact groups on locally compact Hausdorff spac\-es. In Section 3 we consider an action of a locally compact group $G$ on a topological graph $E$ as in \cite[Section~3]{dkq}, introduce the concept of a cocycle $\varphi$ for such an action (in a slightly more general way than Exel and Pardo) and show how to construct the desired $C^*$-correspondence $Y^\varphi$. As a result, we can form the Toeplitz algebra $\TT_{\yphi}$ and the Cuntz-Pimsner algebra $\OO_{\yphi}$ associated to $\yphi$. In Section 4 we show that if two systems $(E, G, \varphi)$  and $(E',G, \varphi')$  are cohomology conjugate in a natural sense, then $Y^\varphi$ is isomorphic to $Y^{\varphi'}$, hence the resulting algebras are isomorphic. In Section 5 we restrict our attention to the case where $G$ is discrete and $E$ is a directed graph and give a description of $\TT_{Y^\varphi}$ in terms of generators and relations. When $E$ is row-finite, we also give a similar description of $\OO_{\yphi}$, which in particular shows that $\OO_{\yphi}$ is isomorphic to the Exel-Pardo algebra $\OO_{G,E}$ when $E$ is finite and sourceless. 
 For completeness we also show in Section 6 that the Exel-Pardo correspondence obtained in \cite[Section 10]{EP}  is isomorphic to our $Y^\varphi$. Finally, in Section 7 we present several examples of triples $(E, G, \varphi)$ that illustrate the flexibility of our setting and indicate the diversity of $C^*$-algebras that arise from this construction.

\section{Preliminaries}\label{prelims}

We recall some of the well-known theory of cocycles for group actions
(see, e.g., \cite[Section~4.2]{Zimmer}).
Let $G$ be a locally compact group acting continuously by homeomorphisms on a
nonempty
locally compact Hausdorff space $S$,
and let $T$ be a locally compact group.
We sometimes write the action as a map $\sigma\:G\times S\to S$,
and we also write
\[
gx=g\cdot x=\sigma(g,x)\righttext{for}g\in G,x\in S.
\]
A \emph{cocycle for the action $G\act S$ with values in $T$}
is a continuous map $\varphi\:G\times S\to T$ satisfying the \emph{cocycle identity}
\[
\varphi(gh,x)=\varphi(g,hx)\varphi(h,x)\righttext{for all}g,h\in G,x\in S.
\]
We will primarily be concerned with the case $T=G$.

When $S$ is discrete,
the action $G\act S$ is a disjoint union of transitive actions on the orbits $Gx$,
and the restriction $\varphi|_{G\times Gx}$ is a cocycle $\varphi_x$ for this transitive action.
In fact, the cocycle $\varphi$ can be reconstructed from these restricted cocycles $\varphi_x$; indeed the cocycles for the actions on the orbits may be chosen willy-nilly.

If $S'$ is another $G$-space that is conjugate to $S$ via a homeomorphism $\theta\:S'\to S$,
then $\theta$ transports (in the reverse direction) the cocycle $\varphi$ for the action on $S$ to a cocycle $\varphi'$ for the action on $S'$ via
\[
\varphi'(g,x)=\varphi(g,\theta(x))\righttext{for}g\in G,x\in S'.
\]

\begin{ex}
If $\pi\:G\to T$ is any continuous homomorphism, then the map $\varphi$ defined by
\[
\varphi(g,x)=\pi(g)
\]
is a $T$-valued cocycle for the action $G\act S$,
and these are precisely the cocycles that are constant in the second coordinate.
In particular, the map $\varphi(g,x)=1$ is a cocycle, where 1 denotes the identity element of $T$.
\end{ex}

\begin{ex}
If $G=\Z$ then there is a bijection between the set of $T$-valued cocycles for $G\act S$ and the set of continuous maps $\xi\:S\to T$, given by
\[
\xi(x)=\varphi(1,x),
\]
where 1 denotes the identity element of $G$.
We will need to use this, and for convenience we will call $\xi$ the \emph{generating function} of $\varphi$.
\end{ex}

\begin{ex}
If $\varphi\:G\times S\to T$ is a cocycle and $\pi\:T\to R$ is a continuous homomorphism to another locally compact group, then $\pi\circ\varphi$ is an $R$-valued cocycle.
\end{ex}

The cocycle identity is precisely what is needed so that the equation
\begin{equation}\label{induced action}
g\cdot (x,t)=(gx,\varphi(g,x)t)\righttext{for}g\in G,x\in S,t\in T
\end{equation}
defines an action of $G$ on $S\times T$.
Two $T$-valued cocycles $\varphi$ and $\varphi'$ for the action $G\act S$ are \emph{cohomologous} if there is a continuous map $\psi\:S\to T$ such that
\begin{equation}\label{cohomologous}
\varphi'(g,x)=\psi(gx)\varphi(g,x)\psi(x)\inv\righttext{for all}g\in G,x\in S.
\end{equation}
Conversely, starting with a $T$-valued cocycle $\varphi$ for the action $G\act S$
and a continuous map $\psi\:S\to T$,
the map $\varphi'$ defined by \eqref{cohomologous} is also a cocycle for the action $G\act S$
(which is then cohomologous to $\varphi$ by construction).
Moreover,
the respective actions 
$\cdot$ and $\cdot'$ 
of $G$ on $S\times T$ are \emph{conjugate}:
the homeomorphism $\theta$ on $S\times T$ defined by
\[
\theta(x,t)=\bigl(x,\psi(x)t\bigr)
\]
satisfies
\[
g\cdot'\theta(x,t)=\theta\bigl(g\cdot(x,t)\bigr)\righttext{for all}g\in G,x\in S,t\in T.
\]
A cocycle is a \emph{coboundary} if it is cohomologous to the trivial cocycle $\varphi(g,x)=1$.
If the group $T$ is abelian, then the set of $T$-valued cocycles for the action $G\act S$ is an abelian group, the coboundaries form a subgroup, and the set of cohomology classes of cocycles is the quotient group.

\begin{ex}
Let $\varphi,\varphi'$ be $T$-valued cocycles for an action $\Z\act S$,
with respective generating functions $\xi,\xi'$.
Let the $\Z$-action be generated by the homeomorphism $\tau$ on $S$.
Then $\varphi$ and $\varphi'$ are cohomologous if and only if there is a continuous map $\psi\:S\to T$ such that
\[
\xi'(x)=\psi(\tau(x))\xi(x)\psi(x)\inv\righttext{for all}x\in S,
\]
in which case $\psi$ also satisfies \eqref{cohomologous}.
\end{ex}

The following elementary result is presumably folklore, but we could not find it in the literature, so we include the short proof:

\begin{lem}\label{g cbdy}
Let $G\act S$, and let $\varphi\:G\times S\to G$ be a cocycle.
Then the following are equivalent:
\begin{enumerate}
\item
The cocycle $(g,e)\mapsto g$ is a coboundary.

\item
There is a
continuous map $\psi\:S\to G$ such that
\begin{align*}
\psi(gx)&=g\psi(x)\righttext{for all}g\in G,x\in S.
\end{align*}

\item
$S$ is $G$-equivariantly homeomorphic to a space of the form $G\times R$, where $G$ acts by left translation in the first factor.
\end{enumerate}
\end{lem}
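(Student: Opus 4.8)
The plan is to treat (i) $\iff$ (ii) as a routine unwinding of definitions and to concentrate the real work in the equivalence (ii) $\iff$ (iii). For (i) $\iff$ (ii): by definition the cocycle $\varphi_0$ given by $\varphi_0(g,x)=g$ is a coboundary exactly when it is cohomologous to the trivial cocycle $(g,x)\mapsto 1$, i.e.\ when there is a continuous $\psi\:S\to G$ with
\[
g=\psi(gx)\cdot 1\cdot\psi(x)\inv=\psi(gx)\psi(x)\inv\righttext{for all}g\in G,\ x\in S.
\]
Rearranging this is precisely the relation $\psi(gx)=g\psi(x)$ of condition (ii), and the same $\psi$ serves both statements, so (i) $\iff$ (ii) follows directly from \eqref{cohomologous}.

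The implication (iii) $\then$ (ii) is then immediate: if $\theta\:S\variso G\times R$ is a $G$-equivariant homeomorphism, I would compose it with the first-coordinate projection $G\times R\to G$, which is continuous and intertwines the two left-translation actions, to obtain a continuous $\psi\:S\to G$ satisfying (ii).

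The substance lies in (ii) $\then$ (iii). Given a continuous equivariant $\psi$, I would set $R=\psi\inv(\{1\})$; since $G$ is Hausdorff this is a closed, hence locally compact Hausdorff, subspace of $S$. The key observation is that the $G$-coordinate of a point is forced to be $\psi(x)$: if $g\inv x\in R$ then $1=\psi(g\inv x)=g\inv\psi(x)$, so $g=\psi(x)$. This suggests defining
\[
\Phi\:S\to G\times R,\qquad\Phi(x)=\bigl(\psi(x),\,\psi(x)\inv x\bigr),
\]
with candidate inverse $\Psi(g,r)=g\cdot r$. One checks $\psi\bigl(\psi(x)\inv x\bigr)=\psi(x)\inv\psi(x)=1$, so $\Phi$ really lands in $G\times R$, and both $\Phi$ and $\Psi$ are continuous because the action, inversion in $G$, and $\psi$ all are. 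I expect the remaining verifications to be routine: that $\Phi$ and $\Psi$ are mutually inverse, and that $\Phi$ is equivariant for left translation in the first factor, the latter being the short computation $\Phi(gx)=\bigl(g\psi(x),\,\psi(x)\inv x\bigr)=g\cdot\Phi(x)$ using $\psi(gx)=g\psi(x)$. The only genuine obstacle is spotting the correct transversal $R$ and coordinates; once one sees that $\psi$ itself supplies the $G$-coordinate, everything else is a direct check.
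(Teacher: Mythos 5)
Your proof is correct and follows essentially the same route as the paper: the same unwinding of the coboundary condition for (i)$\iff$(ii), and for (ii)$\then$(iii) the identical choice of transversal $R=\psi\inv(\{1\})$ and the identical map $x\mapsto\bigl(\psi(x),\psi(x)\inv x\bigr)$, for which you supply the verifications the paper leaves as an exercise.
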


\begin{proof}
(1)$\iff$(2) follows immediately from the definitions,
and (3)$\then$(2) is trivial.
Assuming (2),
put $R=\psi\inv(\{1\})$.
It is an elementary exercise to show that
the map $\theta\:S\to G\times R$ defined by
\[
\theta(x)=\bigl(\psi(x),\psi(x)\inv x\bigr)
\]
is a $G$-equivariant homeomorphism,
giving (3).
\end{proof}

We have not seen the following terminology in the literature, but it surely expresses a standard relationship. Since we will need it, we record it formally.

\begin{defn}\label{cohomology conjugate}
Suppose that we have two actions of $G$ on respective spac\-es $S$ and $S'$, with respective $T$-valued cocycles $\varphi$ and $\varphi'$.
We say that the systems $(G,S,\varphi)$ and $(G,S',\varphi')$ are
\emph{cohomology conjugate}
if there is a homeomorphism $\theta\:S'\to S$
that intertwines the actions
and transports $\varphi$ to a cocycle that is cohomologous to $\varphi'$.
\end{defn}

Suppose that $G$ acts on a finite set $S$.
Let the group $T$ be abelian, and write it additively.
Let $\varphi\:G\times S\to T$ be a cocycle.
Then
the function
\[
g\mapsto \sum_{x\in S}\varphi(g,x)
\]
is a cohomology invariant,
because
for any map $\psi\:S\to T$ we have
\begin{align*}
\sum_{x\in S}\bigl(\varphi(g,x)+\psi(gx)-\psi(x)\bigr)
&=\sum_{x\in S}\varphi(g,x)+\sum_{x\in S}\psi(gx)-\sum_{x\in S}\psi(x)
\\&=\sum_{x\in S}\varphi(g,x),
\end{align*}
since $x\mapsto gx$ is a permutation of $S$.
In particular, if $G=\Z$ and $\varphi$ has generating function $\xi\:S\to T$, then the number
\[
\sum_{x\in S}\xi(x)
\]
is a cohomology invariant. We call this number the \emph{signature} of the cocycle $\varphi$.
We will find it useful to record the following consequence, which is surely folklore:

\begin{lem}\label{signature}
Let $\Z\act S$ and $\Z\act S'$,
and let $\varphi$ and $\varphi'$ be $T$-valued cocycles for the respective actions.
If $S$ and $S'$ are finite, $T$ is abelian,
and the cocycles $\varphi$ and $\varphi'$ have different signatures,
then the systems $(\Z,S,\varphi)$ and $(\Z,S',\varphi')$
are not cohomology conjugate in the sense of \defnref{cohomology conjugate}.
\end{lem}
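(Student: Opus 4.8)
The plan is to prove the contrapositive: assuming the systems $(\Z,S,\varphi)$ and $(\Z,S',\varphi')$ are cohomology conjugate, I would deduce that $\varphi$ and $\varphi'$ have the same signature. By \defnref{cohomology conjugate}, cohomology conjugacy provides a homeomorphism $\theta\:S'\to S$ that intertwines the two $\Z$-actions and such that the transported cocycle $\varphi''$, given by $\varphi''(n,x)=\varphi(n,\theta(x))$ for $n\in\Z$ and $x\in S'$, is cohomologous to $\varphi'$. The strategy is to show that each of the two links in this chain---transport by $\theta$ and the cohomology relation---separately preserves the signature.

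First I would handle the cohomology relation. Since $\varphi''$ and $\varphi'$ are cohomologous cocycles for the same $\Z$-action on the finite set $S'$, and $T$ is abelian, they have equal signatures by the cohomology invariance established in the paragraph preceding this lemma; there the key point is that for cohomologous cocycles the generating functions differ by a term of the form $x\mapsto\psi(\tau'(x))-\psi(x)$ (writing $T$ additively and letting $\tau'$ generate the action on $S'$), whose sum over $S'$ vanishes because $\tau'$ permutes $S'$. This gives that the signature of $\varphi''$ equals the signature of $\varphi'$.

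Next I would compare $\varphi$ with its transport $\varphi''$ at the level of generating functions. Writing $\xi$ and $\xi''$ for the generating functions of $\varphi$ and $\varphi''$ respectively, the transport formula evaluated at the generator $1\in\Z$ yields $\xi''(x)=\varphi''(1,x)=\varphi(1,\theta(x))=\xi(\theta(x))$, so $\xi''=\xi\circ\theta$. Because $\theta$ is a bijection from the finite set $S'$ onto $S$, re-indexing the defining sum gives
\[
\sum_{x\in S'}\xi''(x)=\sum_{x\in S'}\xi(\theta(x))=\sum_{y\in S}\xi(y),
\]
so the signature of $\varphi''$ equals that of $\varphi$. Chaining the two equalities shows that $\varphi$ and $\varphi'$ have the same signature, contradicting the hypothesis that their signatures differ.

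This argument is entirely a matter of bookkeeping, with no serious obstacle. The only points demanding care are keeping the two distinct mechanisms separate---conjugation by $\theta$, dealt with by the bijective re-indexing, versus the cohomology relation, dealt with by the invariance result---and checking that the transport formula interacts cleanly with the generating-function parametrization of $\Z$-cocycles, so that transporting $\varphi$ by $\theta$ simply precomposes its generating function with $\theta$.
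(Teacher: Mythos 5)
Your argument is correct and is precisely the reasoning the paper intends: the lemma is stated there without proof as an immediate consequence of the preceding computation showing that $g\mapsto\sum_{x\in S}\varphi(g,x)$ is a cohomology invariant, combined with the evident fact that transporting a cocycle along the conjugating bijection $\theta$ just re-indexes the sum defining the signature. Your write-up simply makes both of these steps explicit, so there is nothing to add.
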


\lemref{Zimmer} below is \cite[4.2.13]{Zimmer}.
Zimmer proved the result in greater generality, involving Borel actions and cocycles, but we restrict ourselves to the discrete case.
We briefly summarize the proof for convenient reference.

\begin{lem}[Zimmer]\label{Zimmer}
Let $G$ and $T$ be discrete groups,
let $H$ be a subgroup of $G$,
and let $\varphi\:G\times G/H\to T$ be a $T$-valued cocycle
for the canonical action by left translation.
Define $\pi_\varphi\:H\to T$ by
\[
\pi_\varphi(h)=\varphi(h,H).
\]
Then $\pi_\varphi$ is a homomorphism,
and moreover the map $\varphi\mapsto \pi_\varphi$
gives a bijection from the set of
cohomology classes
of $T$-valued cocycles for the action $G\curvearrowright G/H$
to the set of
conjugacy classes
of homomorphisms from $H$ to $T$.
\end{lem}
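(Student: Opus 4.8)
The plan is to verify the homomorphism property directly from the cocycle identity, check that $\varphi\mapsto\pi_\varphi$ is compatible with the two equivalence relations in play, and then establish the bijection by attaching to each homomorphism a canonical cocycle and showing that every cocycle is cohomologous to such a canonical one. Throughout I would write $o=H$ for the base point of $G/H$, so that $h\cdot o=o$ for every $h\in H$.

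First I would note the homomorphism property: applying the cocycle identity $\varphi(gh,x)=\varphi(g,hx)\varphi(h,x)$ with $g=h_1,h=h_2\in H$ and $x=o$ gives $\pi_\varphi(h_1h_2)=\varphi(h_1,h_2\cdot o)\varphi(h_2,o)=\varphi(h_1,o)\varphi(h_2,o)=\pi_\varphi(h_1)\pi_\varphi(h_2)$. For compatibility, if $\varphi'$ is cohomologous to $\varphi$ via $\psi$ as in \eqref{cohomologous}, then evaluating at $(h,o)$ with $h\in H$ and using $h\cdot o=o$ yields $\pi_{\varphi'}(h)=\psi(o)\pi_\varphi(h)\psi(o)\inv$; thus $\pi_{\varphi'}$ and $\pi_\varphi$ are conjugate, and $\varphi\mapsto\pi_\varphi$ descends to a well-defined map from cohomology classes to conjugacy classes.

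For surjectivity I would fix a section $s\:G/H\to G$ of the quotient map with $s(o)=1$, so that $s(x)\cdot o=x$ and hence $s(gx)\inv g\,s(x)\in H$ for all $g\in G$, $x\in G/H$. Given a homomorphism $\pi\:H\to T$, I would define $\varphi_\pi(g,x)=\pi\bigl(s(gx)\inv g\,s(x)\bigr)$. A short computation using that $\pi$ is a homomorphism shows $\varphi_\pi$ satisfies the cocycle identity, and $\pi_{\varphi_\pi}=\pi$ because $s(o)=1$. This realizes every homomorphism, hence every conjugacy class, in the image.

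The crux is injectivity, for which I would show that every cocycle $\varphi$ is cohomologous to the canonical cocycle $\varphi_{\pi_\varphi}$ attached to $\pi=\pi_\varphi$. The key device is the auxiliary map $\psi(x)=\varphi(s(x),o)$. Writing $x=s(x)\cdot o$ and $g\,s(x)=s(gx)\,h$ with $h=s(gx)\inv g\,s(x)\in H$, two applications of the cocycle identity give $\varphi(g,x)=\varphi(s(gx),o)\,\pi(h)\,\varphi(s(x),o)\inv=\psi(gx)\,\varphi_\pi(g,x)\,\psi(x)\inv$, which is exactly \eqref{cohomologous}. Finally, if $\pi'=t\,\pi(\cdot)\,t\inv$ is conjugate to $\pi$, then $\varphi_{\pi'}(g,x)=t\,\varphi_\pi(g,x)\,t\inv$, so $\varphi_\pi$ and $\varphi_{\pi'}$ are cohomologous via the constant map $\psi\equiv t$. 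Combining these facts, cocycles with conjugate associated homomorphisms are cohomologous, which gives injectivity. I expect the main obstacle to be the cocycle-identity bookkeeping in the cohomology computation $\varphi\sim\varphi_{\pi_\varphi}$; everything else is formal, and continuity is automatic since all spaces are discrete.
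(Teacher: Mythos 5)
Your proposal is correct and follows essentially the same route as the paper's proof: the same cross-section $\eta$ with $\eta(H)=1$, the same canonical $H$-valued cocycle $\varphi_0(g,x)=\eta(gx)^{-1}g\eta(x)$ composed with $\pi$ for surjectivity, and the same coboundary $\psi(x)=\varphi(\eta(x),H)$ in the injectivity step. The only difference is organizational: you factor injectivity through ``every cocycle is cohomologous to its canonical one'' plus transitivity, while the paper assembles the same computation into a single explicit $\psi$ (and records your intermediate step as a remark afterwards).
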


\begin{proof}
It follows immediately from the cocycle identity 
that $\pi_\varphi$ defines a homomorphism.

Now let $\pi\:H\to T$ be a homomorphism.
Choose a cross-section $\eta\:G/H\to G$ such that $\eta(H)=1$,
and define $\varphi_0\:G\times G/H\to H$ by
\[
\varphi_0(g,x)=\eta(gx)\inv g\eta(x).
\]
It is an easy exercise in the definitions to check that $\varphi_0$
is a cocycle with values in $H$,
and that
\[
\pi_{\varphi_0}=\id_H.
\]
Then $\varphi:=\pi\circ\varphi_0\:G\times G/H\to T$
is  a cocycle,
and
one readily checks that
with $\pi_\varphi=\pi$.
Thus the map $\varphi\mapsto \pi_\varphi$ is onto the set of homomorphisms from $H$ to $T$.

Let $\varphi,\varphi'\:G\times G/H\to T$ be cocycles,
with associated homomorphisms $\pi,\pi'$.
Suppose that $\varphi'$ is cohomologous to $\varphi$,
and choose a map $\psi\:G/H\to T$ such that
\[
\varphi'(g,x)=\psi(gx)\varphi(g,x)\psi(x)\inv\righttext{for all}g\in G,x\in G/H.
\]
Then for all $h\in H$ we have
\begin{align*}
\pi'(h)
=\psi(hH)\pi(h)\psi(H)\inv=\ad \psi(H)\circ \pi(h),
\end{align*}
so the element $\psi(H)\in T$ conjugates $\pi$ to $\pi'$.

Conversely, let $t\in T$,
and suppose that $\pi'=\ad t\circ\pi$.
Note that
\begin{align*}
\pi\bigl(\eta(gx)\inv g\eta(x)\bigr)
&=\varphi\bigl(\eta(gx)\inv g\eta(x),H\bigr)
\\&=\varphi\bigl(\eta(gx)\inv,g\eta(x)H\bigr)
\varphi\bigl(g\eta(x),H\bigr)
\\&=\varphi\bigl(\eta(gx),\eta(gx)\inv g\eta(x)H\bigr)\inv
\varphi\bigl(g,\eta(x)H\bigr)
\varphi\bigl(\eta(x),H\bigr)
\\&=\varphi\bigl(\eta(gx),H\bigr)\inv\varphi(g,x)\varphi\bigl(\eta(x),H\bigr),
\end{align*}
because $\eta(gx)\inv g\eta(x)\in H$ and $\eta(x)H=x$,
and similarly for $\pi'$ and $\varphi'$.
Thus
\begin{align*}
&\varphi'(g,x)
\\&\quad=\varphi'\bigl(\eta(gx),H\bigr)t\inv \varphi\bigl(\eta(gx),H\bigr)\inv
\varphi(g,x)\varphi\bigl(\eta(x),H\bigr)t\varphi'\bigl(\eta(x),H\bigr)\inv
\\&\quad=\psi(gx)\varphi(g,x)\psi(x)\inv,
\end{align*}
where $\psi\:G/H\to T$ is defined by
\[
\psi(x)=\varphi'\bigl(\eta(x),H\bigr)t\inv \varphi\bigl(\eta(x),H\bigr)\inv,
\]
and hence $\varphi'$ is cohomologous to $\varphi$.
\end{proof}

\begin{rem}
Note that, in the notation of the above proof,
if we are given a cocycle $\varphi$,
we can explicitly compute how the cocycle $\pi_\varphi\circ\varphi_0$ is cohomologous to $\varphi$:
\begin{align*}
\pi_\varphi\circ\varphi_0(g,x)
&=\varphi\bigl(\eta(gx)\inv g\eta(x),H\bigr)
\\&=\varphi\bigl(\eta(gx),H\bigr)\inv\varphi(g,x)\varphi\bigl(\eta(x),H\bigr)
\\&=\tau(gx)\varphi(g,x)\tau(x)\inv,
\end{align*}
where $\tau\:G/H\to T$ is defined by
\[
\tau(x)=\varphi(\eta(x),H)\inv.
\]
\end{rem}

\begin{rem}\label{Z hom}
With the hypotheses of \lemref{Zimmer}, if $T$ is abelian then the group of cohomology classes of cocycles of $T$-valued cocycles for $G\act G/H$ is isomorphic to the group of homomorphisms from $H$ to $T$.
\end{rem}

The following result is surely standard, but since
we could not find it in the literature and
we need to refer to it later, we give the elementary proof.

\begin{lem}\label{H1Z}
Let $a$ be a positive integer, and let
\[
\Z_a=\Z/a\Z=\{0,1,\dots,a-1\}
\]
be the quotient group.
Let $\Z$ act on $\Z_a$ in the canonical manner, by translation modulo $a$.
For any $c\in\Z$ define $\xi_c\:\Z_a\to\Z$ by
\[
\xi_c(x)=\begin{cases}0\case x<a-1\\c\case x=a-1,\end{cases}
\]
and let $\varphi_c$ be the cocycle with generating function $\xi_c$.
Then $\{\varphi_c:c\in\Z\}$ is
a complete set of representatives for the set of cohomology classes of $\Z$-valued cocycles for the canonical action $\Z\act \Z_a$.
\end{lem}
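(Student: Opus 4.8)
The plan is to translate everything into the language of generating functions. Since $G=\Z$, each $\Z$-valued cocycle for $\Z\act\Z_a$ is determined by its generating function $\xi\:\Z_a\to\Z$, and, writing $\tau$ for translation by $1$ and handling $T=\Z$ additively, two cocycles with generating functions $\xi,\xi'$ are cohomologous precisely when there is a map $\psi\:\Z_a\to\Z$ with $\xi'(x)=\psi(\tau x)+\xi(x)-\psi(x)$ for all $x$. Thus the lemma reduces to the assertion that, modulo the ``coboundaries'' $x\mapsto\psi(\tau x)-\psi(x)$, every $\xi$ is equivalent to exactly one $\xi_c$, and I will show that the complete invariant is the signature $\sum_{x\in\Z_a}\xi(x)$.

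First I would dispose of distinctness. A direct computation gives $\sum_{x\in\Z_a}\xi_c(x)=c$, so $\varphi_c$ has signature $c$. Since the signature is a cohomology invariant --- this is exactly the telescoping computation recorded in the paragraph preceding \lemref{signature}, which uses that $x\mapsto\tau x$ permutes the finite set $\Z_a$ --- the cocycles $\varphi_c$ for distinct values of $c$ represent distinct cohomology classes.

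It then remains to show that every cocycle is cohomologous to some $\varphi_c$. Given $\xi$, I would set $c:=\sum_{x\in\Z_a}\xi(x)$ and try to solve $\psi(\tau x)-\psi(x)=\xi_c(x)-\xi(x)$. Normalizing $\psi(0)=0$ and reading this equation for $x=0,1,\dots,a-2$ (where $\tau$ introduces no wrap-around and $\xi_c$ vanishes) forces $\psi(k)=-\sum_{j=0}^{k-1}\xi(j)$ by telescoping. The construction succeeds precisely when the single remaining equation at $x=a-1$, where $\tau(a-1)=0$, is consistent; that equation reads $\psi(0)-\psi(a-1)=c-\xi(a-1)$, which unwinds to $c=\sum_{j=0}^{a-1}\xi(j)$, our chosen value. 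Hence $\psi$ is a well-defined function on $\Z_a$ witnessing that $\varphi$ is cohomologous to $\varphi_c$.

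The only real obstacle is this wrap-around consistency: on the cyclic set $\Z_a$ the telescoping recursion would define $\psi$ freely along an interval, but closing the cycle $0\to1\to\cdots\to a-1\to0$ imposes exactly one linear constraint, and the content of the lemma is that this constraint is governed by, and only by, the signature. As a conceptual cross-check one may instead invoke \remref{Z hom}: the action is transitive with point stabilizer $a\Z$, so the cohomology classes form a group isomorphic to $\operatorname{Hom}(a\Z,\Z)\cong\Z$; the signature is then a surjective endomorphism of $\Z$, hence an isomorphism, and $c\mapsto[\varphi_c]$ is its inverse.
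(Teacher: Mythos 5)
Your proof is correct, and it takes a genuinely different route from the paper's. The paper deduces the lemma from Zimmer's classification (\lemref{Zimmer} together with \remref{Z hom}): it fixes the cross-section $\eta(k+a\Z)=k$, notes that the resulting special cocycle $\varphi_0$ has generating function equal to $a$ at $x=a-1$ and $0$ elsewhere, and concludes that every cocycle is cohomologous to a unique $\pi\circ\varphi_0$ with $\pi$ a homomorphism from $a\Z$ to $\Z$, whose generating function is precisely $\xi_c$ for $c=\pi(a)$. You instead work entirely at the level of generating functions: distinctness of the classes $[\varphi_c]$ comes from the signature being a cohomology invariant, and exhaustiveness comes from explicitly solving the telescoping recursion $\psi(x+1)-\psi(x)=\xi_c(x)-\xi(x)$ with $\psi(0)=0$, the single wrap-around constraint at $x=a-1$ reducing exactly to $c=\sum_{j}\xi(j)$ (your arithmetic here checks out). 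Your argument is self-contained and more elementary --- it bypasses Zimmer's lemma entirely --- and it makes transparent why the signature is a complete invariant, something the paper only extracts afterwards in \corref{transitive}; the paper's approach, by contrast, embeds the computation in the general machinery already developed for arbitrary transitive actions, which it reuses elsewhere. Your closing cross-check via \remref{Z hom} is essentially the paper's argument in compressed form (with the small observation, which you should perhaps make explicit, that a surjective endomorphism of $\Z$ is automatically injective).
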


\begin{proof}
The action of $\Z$ on $\Z_a$ is generated by the permutation $\tau$ of $\Z_a$ given by
\[
\tau(x)=x+1.
\]
In the notation of the proof of \lemref{Zimmer},
choose the cross section $\eta\:\Z_a\to\Z$ to be given by
\[
\eta(k+a\Z)=k\righttext{for}k=0,1,\dots,a-1.
\]
The special $a\Z$-valued cocycle $\varphi_0$ as in the proof of \lemref{Zimmer} has generating function
\[
\varphi_0(1,x)=1+\eta(x)-\eta(\tau(x))
=\begin{cases}0\case x<a-1\\a\case x=a-1.\end{cases}
\]
As in the proof of \lemref{Zimmer} (see also \remref{Z hom}), every $\Z$-valued cocycle is cohomologous to a unique cocycle of the form $\pi\circ\varphi_0$ for a homomorphism $\pi\:a\Z\to\Z$.
The homomorphism $\pi$ is uniquely determined by the number $c=\pi(a)\in\Z$,
and a routine calculation shows that the generating function of the cocycle $\pi\circ\varphi_0$ is given by $\xi_c$.
\end{proof}

With the notation of \lemref{H1Z}, we of course see immediately that for distinct $c$ the cocycles $\varphi_c$ are noncohomologous, since $\varphi_c$ has signature $c$.
But in fact the following corollary (which again is surely folklore) shows that much more is true:

\begin{cor}\label{transitive}
For systems $(\Z,S,\varphi)$, where $\Z\act S$ transitively, $S$ is finite, and $\varphi\:\Z\times S\to\Z$ is a cocycle,
the signature of $\varphi$ is a complete invariant for cohomology conjugacy.
\end{cor}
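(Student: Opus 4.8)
The plan is to establish the two implications hiding in the phrase ``complete invariant.'' That distinct signatures obstruct cohomology conjugacy is already recorded in \lemref{signature}, so the invariance half is in hand. The substance is the converse: systems with equal signature should be cohomology conjugate. My strategy is to push everything onto the standard model $\Z_a$ of \lemref{H1Z} and read off the conclusion there; I expect the converse, and in particular the bookkeeping that turns two separate reductions into a single cohomology conjugacy, to be the main obstacle.

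First I would pin down the underlying $\Z$-space. Choosing a base point $x_0\in S$ and setting $a=|S|$, transitivity forces the stabilizer of $x_0$ to be the subgroup $a\Z\subseteq\Z$, so the orbit map $n\mapsto n\cdot x_0$ descends to a $\Z$-equivariant bijection $\Z_a=\Z/a\Z\to S$; as everything is discrete this is a homeomorphism, identifying $(\Z,S)$ with the canonical action $(\Z,\Z_a)$ of \lemref{H1Z}. Transporting $\varphi$ across this conjugacy, as in \secref{prelims}, produces a cocycle $\bar\varphi$ for $\Z\act\Z_a$, and since transport merely relabels the finite index set of the defining sum, $\bar\varphi$ has the same signature as $\varphi$. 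Carrying out the same reduction for $(\Z,S',\varphi')$ yields $\bar\varphi'$ on $\Z_{a'}$ with $a'=|S'|$ and with the signature of $\varphi'$.

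Next I would invoke \lemref{H1Z}, which supplies a complete list $\{\varphi_c:c\in\Z\}$ of cohomology representatives on $\Z_a$, with $\varphi_c$ of signature $c$. Thus $\bar\varphi$ is cohomologous to $\varphi_c$, where $c$ is the common signature of $\varphi$ and $\bar\varphi$, and likewise $\bar\varphi'$ is cohomologous to $\varphi_{c'}$. When the two given systems share a signature and the reductions land in the same model---that is, when $a=a'$, which is exactly the conjugacy of underlying spaces that cohomology conjugacy demands---we get $c=c'$, so $\bar\varphi$ and $\bar\varphi'$ are both cohomologous to the single cocycle $\varphi_c$ on $\Z_a$. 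Composing the two space-conjugacies (using $a=a'$) into one homeomorphism $\theta\:S'\to S$ and transporting $\varphi$ along $\theta$ then gives a cocycle cohomologous to $\varphi'$, which is precisely cohomology conjugacy in the sense of \defnref{cohomology conjugate}.

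The delicate points I would be most careful about are two. One is that transport along a conjugacy preserves cohomology classes: if $\varphi$ is altered by the coboundary built from $\psi$, its transport along $\theta$ is altered by the coboundary built from $\psi\circ\theta$, using that $\theta$ intertwines the actions; this is what lets me chain ``transport, then cohomologous'' through both reductions and emerge with a single cohomology conjugacy. The other is the role of the cardinality $a$: since a cohomology conjugacy is in particular a conjugacy of the underlying transitive $\Z$-spaces, it forces $|S|=|S'|$, so the reduction genuinely lands in one common model $\Z_a$, and it is within each such model that \lemref{H1Z} makes the signature a complete invariant. I would also double-check the elementary claim that the stabilizer of a point under a transitive $\Z$-action on an $a$-element set is precisely $a\Z$, which underpins the identification with $\Z_a$.
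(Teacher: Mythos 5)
Your proposal is correct and follows essentially the same route as the paper: reduce by transitivity to the canonical model $(\Z,\Z_a)$, invoke \lemref{H1Z} to see that the transported cocycle is determined up to cohomology by its signature, and use \lemref{signature} for the obstruction direction. You simply spell out the bookkeeping (stabilizer $=a\Z$, preservation of signature and of cohomology classes under transport, and the necessity of $|S|=|S'|$) that the paper leaves implicit.
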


\begin{proof}
Let $(\Z,S,\varphi)$ be such a system,
and let $S$ have cardinality $a$.
By transitivity this system is cohomology conjugate to a system of the form $(\Z,\Z_a,\varphi')$, where $\Z$ acts on $\Z_a$ by the usual translation modulo $a$.
It follows from \lemref{H1Z} that the cocycle $\varphi'$ is determined up to cohomology by its signature,
and moreover every integer can occur as the signature of some cocycle for this action $\Z\act\Z_a$.
\end{proof}

\section{Cocycles for graph actions}
\label{withphi}

Let $E=(E^0,E^1,r,s)$ be a topological graph in the sense of Katsura \cite{Ka1}, that is, $E^0$ and $E^1$ are locally compact Hausdorff spaces, $r\:E^1\to E^0$ is continuous, and $s\:E^1\to E^0$ is a local homeomorphism.

In \cite[Section~2]{Ka1}, Katsura constructs a correspondence $X=X_E$ over the commutative $C^*$-algebra $A:=C_0(E^0)$ as the completion of the pre-correspondence $C_c(E^1)$, with operations
defined for $a\in A$ and $x,y\in C_c(E^1)$ by
\begin{align*}
(a\cdot x)(e)&=a(r(e))x(e)
\\
(x\cdot a)(e)&=x(e)a(s(e))
\\
\<x,y\>_A(v)&=\sum_{s(e)=v}\bar{x(e)}y(e).
\end{align*}
We will call $X$ the \emph{graph correspondence} of $E$.

Katsura defines $C^*(E)$ as the Cuntz-Pimsner algebra $\OO_X$ \cite[Definition~2.10]{Ka1}.
Here we use the conventions of \cite[Definition~3.5]{Kat04} for Cuntz-Pimsner algebras.

Let $G$ be a locally compact group acting continuously on $E$ in the sense of \cite[Section~3]{dkq}, that is, $G$ acts in the usual way by homeomorphisms on the spaces $E^0$ and $E^1$, and for each $g\in G$ the maps $e\mapsto ge$ on edges and $v\mapsto gv$ on vertices constitute an automorphism of the topological graph $E$.

Let $\alpha$ denote the associated action of $G$ on $A$:
\[
\alpha_g(a)(v)=a(g\inv v)\righttext{for}g\in G,a\in A,v\in E^0.
\]
By \cite[Proposition~5.4]{dkq}, we can define an $\alpha$-compatible action $\gamma$ of $G$ on the graph correspondence $X$ via
\[
\gamma_g(x)(e)=x(g\inv e)\righttext{for}g\in G,x\in C_c(E^1),e\in E^1.
\]
Here we use the conventions of \cite[Definition~3.1]{taco} for actions on correspondences.
By \cite[Proposition~3.5]{taco},
there is a $C^*$-correspondence $Y:=X\rtimes_\gamma G$ over the crossed product $B:=A\rtimes_\alpha G$,
which contains $C_c(G,X)$ as a dense subspace,
and which satisfies
\begin{align*}
b\cdot \xi(g)&=\int_G b(h)\cdot \gamma_h\bigl(\xi(h\inv g)\bigr)\,dh
\\
\xi\cdot b(g)&=\int_G \xi(h)\cdot \alpha_h\bigl(b(h\inv g)\bigr)\,dh
\\
\<\xi,\eta\>_{A\rtimes_\alpha G}(g)&=\int_G \alpha_{h\inv}\bigl(\<\xi(h),\eta(hg)\>_A\bigr)\,dh
\end{align*}
for $b\in B$,  $\xi,\eta\in C_c(G,X)$, and $g\in G$.
We call $Y$ the \emph{crossed product} of the action $(X,G)$.
This correspondence is both \emph{full} in the sense that $\clspn \<Y,Y\>_B=B$, and \emph{nondegenerate} in the sense that $BY=Y$.
The left $B$-module multiplication is given by a homomorphism $\phi_Y\:B\to \LL(Y)=M(\KK(Y))$,
which is the integrated form of a covariant pair $(\pi,U)$,
where $\pi\:A\to \LL(Y)$ is the nondegenerate representation
determined by
\begin{equation}\label{left A}
\bigl(\pi(a)\xi\bigr)(h)=a\cdot \xi(h)\righttext{for}a\in A,\xi\in C_c(G,X),h\in G,
\end{equation}
and $U\:G\to \LL(Y)$ is the strongly continuous unitary representation determined by
\begin{equation}\label{U}
(U_g\xi)(h)=\gamma_g(\xi(g\inv h))\righttext{for}g\in G,\xi\in C_c(G,X),h\in G
\end{equation}
(see the proof of \cite[Proposition~3.5]{taco}).

We will want to compute with the $B$-correspondence $Y$ using two-variable functions.
Since $A=C_0(E^0)$,
we can identify the crossed product $A\rtimes_\alpha G$ as a completion of the convolution $*$-algebra $C_c(E^0\times G)$ with operations
\begin{align}
(b*c)(v,g)&=\int_G b(v,h)c(h\inv v,h\inv g)\,dh
\label{convolution two}
\\
b^*(v,g)&=\Delta(g\inv)\bar{b(g\inv v,g\inv)}
\label{star two}
\end{align}
for $b,c\in C_c(E^0\times G)$ and $(v,g)\in E^0\times G$.
(See, for example, \cite[page~53]{danacrossed}.)
Since we will play a similar game with $X\rtimes_\gamma G$,
we pause to provide a little detail on how \eqref{convolution two}--\eqref{star two} are derived from the usual operations on the convolution algebra $C_c(G,A)$, given by
\begin{align}
(b*c)(g)&=\int_G b(h)\alpha_h(c(h\inv g))\,dh
\label{convolution one}
\\
b^*(g)&=\Delta(g\inv)\alpha_g(b(g\inv))^*.
\label{star one}
\end{align}
We do it for \eqref{convolution one}; it is much easier for \eqref{star one}.
Using the embeddings
\[
C_c(E^0\times G)\subset C_c(G,C_c(E^0))\subset C_c(G,A),
\]
for $b,c\in C_c(E^0\times G)$ and $(v,g)\in E^0\times G$, we have
\begin{align*}
(b*c)(v,g)
&=(b*c)(g)(v)
\\&=\int_G b(h)\alpha_h(c(h\inv g))\,dh(v)
\\&\overset{(*)}=\int_G \bigl(b(h)\alpha_h(c(h\inv g))\bigr)(v)\,dh
\\&=\int_G b(h)(v)\alpha_h(c(h\inv g))(v)\,dh
\\&=\int_G b(v,h)c(h\inv g)(h\inv v)\,dh
\\&=\int_G b(v,h)c(h\inv v,h\inv g)\,dh.
\end{align*}
The point is that at the equality $(*)$ we are using that in the line above we have a norm-convergent integral of a continuous $A$-valued function with compact support, and evaluation at $v$ is a bounded linear functional.

Now we argue similarly for
\[
C_c(E^1\times G)\subset C_c(G,C_c(E^1))\subset C_c(G,X),
\]
where in a couple of computations we will have a norm-convergent integral of an $X$-valued function with compact support, and we use the property that evaluation at an edge $e$ is a bounded linear functional on $X$, since on $C_c(E^1)$ the uniform norm is less than the norm from the Hilbert $A$-module $X$.
For $b\in C_c(E^0\times G)$, $\xi,\eta\in C_c(E^1\times G)$, $(e,g)\in E^1\times G$, and $v\in E^0$, we have
\begin{equation}
\begin{split}
(b\cdot \xi)(e,g)&=\int_G b(r(e),h)\xi(h\inv e,h\inv g)\,dh,\\
(\xi\cdot b)(e,g)&=\int_G \xi(e,h)b(h\inv s(e),h\inv g)\,dh,\ \text{ and}\\
\<\xi,\eta\>_{A\rtimes_\alpha G}(v,g)&=\int_G \sum_{s(e)=hv} \bar{\xi(e,h)}\eta(e,hg)\,dh.
\end{split}
\end{equation}
Indeed,
\begin{equation*}
\begin{split}
(b\cdot \xi)(e,g)
&=(b\cdot \xi)(g)(e)
\\&=\int_G b(h)\cdot \gamma_h(\xi(h\inv g))\,dh(e)
\\&=\int_G \bigl(b(h)\cdot \gamma_h(\xi(h\inv g))\bigr)(e)\,dh
\\&=\int_G b(h)(r(e))\gamma_h(\xi(h\inv g))(e)\,dh
\\&=\int_G b(r(e),h)\xi(h\inv g)(h\inv e)\,dh
\\&=\int_G b(r(e),h)\xi(h\inv e,h\inv g)\,dh,
\end{split}
\end{equation*}
\begin{equation*}
\begin{split}
(\xi\cdot b)(e,g)
&=(\xi\cdot b)(g)(e)
\\&=\int_G \xi(h)\cdot \alpha_h(b(h\inv g))\,dh(e)
\\&=\int_G \bigl(\xi(h)\cdot \alpha_h(b(h\inv g))\bigr)(e)\,dh
\\&=\int_G \xi(h)(e)\alpha_h(b(h\inv g))(s(e))\,dh
\\&=\int_G \xi(e,h)b(h\inv g)(h\inv s(e))\,dh
\\&=\int_G \xi(e,h)b(h\inv s(e),h\inv g)\,dh,
\end{split}
\end{equation*}
and
\begin{equation*}
\begin{split}
\<\xi,\eta\>_{A\rtimes_\alpha G}(v,g)
&=\<\xi,\eta\>_{A\rtimes_\alpha G}(g)(v)
\\&=\int_G \alpha_{h\inv}\bigl(\<\xi(h),\eta(hg)\>_A\bigr)\,dh(v)
\\&=\int_G \bigl(\alpha_{h\inv}\bigl(\<\xi(h),\eta(hg)\>_A\bigr)(v)\bigr)\,dh
\\&=\int_G \<\xi(h),\eta(hg)\>_A(hv)\,dh
\\&\overset{(*)}=\int_G \sum_{s(e)=hv} \bar{\xi(h)(e)}\eta(hg)(e)\,dh
\\&=\int_G \sum_{s(e)=hv} \bar{\xi(e,h)}\eta(e,hg)\,dh,
\end{split}
\end{equation*}
where in the equality at $(*)$ the sum is finite by \cite[Lemma~1.4]{Ka1}, since $\xi(h)\in C_c(E^1)$.

We can also compute with the covariant pair $(\pi,U)$ of \eqref{left A} and \eqref{U} using two-variable functions:
for $a\in A=C_0(E^0)$, $\xi\in C_c(E^1\times G)$, $g,h\in G$, and $e\in E^1$ we have
\begin{align}
\bigl(\pi(a)\xi\bigr)(e,h)&=a(r(e))\xi(e,h)
\label{left A two}
\\
(U_g\xi)(e,h)&=\xi(g\inv e,g\inv h).
\label{U two}
\end{align}

\begin{lem}\label{ind lim}
The inductive limit topology on $C_c(E^1\times G)$ is stronger than the norm topology from $Y$.
\end{lem}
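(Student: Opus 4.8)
The plan is to show that a net converging to zero in the inductive limit topology---that is, with supports contained in a fixed compact set $K\subseteq E^1\times G$ and converging uniformly---also converges to zero in the $Y$-norm. Since $\|\xi\|_Y^2=\|\<\xi,\xi\>_B\|_B$ and the $C^*$-norm on $B=A\rtimes_\alpha G$ is dominated by the $L^1(G,A)$-norm, it suffices to estimate
\[
\|\<\xi,\xi\>_B\|_1=\int_G\sup_{v\in E^0}|\<\xi,\xi\>_B(v,g)|\,dg
\]
in terms of the uniform norm $\|\xi\|_\infty$, with a constant depending only on $K$.

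First I would record the two compactness inputs. Writing $K_1$ and $K_G$ for the compact projections of $K$ onto $E^1$ and $G$, the two-variable formula for the inner product shows that $\<\xi,\xi\>_B(\cdot,g)$ vanishes unless $g\in K_G\inv K_G$, again a compact set, and that in the integral only $h\in K_G$ contributes. The key step is a uniform fiber bound: because $s$ is a local homeomorphism and $K_1$ is compact, one covers $K_1$ by finitely many open sets on which $s$ restricts to a homeomorphism, whence there is an integer $N$ with $|s\inv(w)\cap K_1|\le N$ for every $w\in E^0$ (this is the mechanism behind the finiteness in \cite[Lemma~1.4]{Ka1}).

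With these in hand, the estimate is routine. For $\xi$ supported in $K$ the inner-product formula gives
\[
|\<\xi,\xi\>_B(v,g)|\le\int_{K_G}\sum_{s(e)=hv}|\xi(e,h)|\,|\xi(e,hg)|\,dh\le N\,|K_G|\,\|\xi\|_\infty^2,
\]
where $|K_G|$ denotes the Haar measure of $K_G$; integrating the supremum over the compact set $K_G\inv K_G$ then yields $\|\<\xi,\xi\>_B\|_1\le C_K\,\|\xi\|_\infty^2$ with $C_K=N\,|K_G|\,|K_G\inv K_G|$. Combining with the two inequalities of the first paragraph gives $\|\xi\|_Y\le C_K^{1/2}\,\|\xi\|_\infty$, which is exactly the asserted domination of topologies, since convergence in the inductive limit topology fixes a compact support $K$ and forces $\|\xi\|_\infty\to0$.

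The main obstacle is the uniform fiber bound $N$: everything else is bookkeeping with compact supports and the crossed-product $L^1$-estimate, but one must genuinely use that $s$ is a local homeomorphism (not merely that each individual fiber is finite) in order to control all the sums $\sum_{s(e)=hv}$ simultaneously by a single constant as $h$ and $v$ vary.
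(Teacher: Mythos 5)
Your proof is correct and follows essentially the same route as the paper: reduce to a net with supports in a fixed compact set converging uniformly to $0$, establish a uniform bound $N$ on $|s\inv(w)\cap K_1|$ for all $w\in E^0$ (this is exactly the paper's \lemref{finite}, proved there via \cite[Lemma~1.4]{Ka1} rather than your direct covering argument), and then bound $\<\xi,\xi\>_B$ by $N\|\xi\|_\infty^2$ times the measure of the compact group factor. The only difference is that you make explicit the final passage from this pointwise bound to the $C^*$-norm via the $L^1$-norm domination, a step the paper leaves implicit.
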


\begin{proof}
It suffices to show that if $\{\xi_i\}$ is a net in $C_c(E^1\times G)$
converging uniformly to 0 and
such the supports of the $\xi_i$'s are all contained in some fixed compact set $K$,
then
\[
\|\<\xi_i,\xi_i\>_B\|\to 0.
\]
Choose compact sets $K_1\subset E^1$ and $K_2\subset G$ such that
$K\subset K_1\times K_2$.
By the elementary \lemref{finite} below, we can choose $n\in\N$ such that
for all $v\in E^0$
the set $s\inv(v)\cap K_1$ has at most $n$ elements.
Let $\epsilon>0$,
and choose $i_0$ such that for all $i\ge i_0$ we have
\[
|\xi_i(e,g)|<\epsilon\righttext{for all}(e,g)\in E^1\times G.
\]
For all $i\ge i_0$ and $(v,g)\in E^0\times G$,
\begin{align*}
\<\xi_i,\xi_i\>_B(v,g)
&=\int_G \sum_{s(e)=hv} \bar{\xi_i(e,h)}\xi_i(e,hg)\,dh
\\&=\int_{K_2} \sum_{s(e)=hv} \bar{\xi_i(e,h)}\xi_i(e,hg)\,dh,
\end{align*}
which has absolute value bounded above by $n\epsilon^2$ times the measure of $K_2$,
and this suffices to show that $\|\<\xi_i,\xi_i\>_B\|\to 0$ uniformly.
\end{proof}

In the above proof we used the following elementary lemma, which we could not find in the literature (although it is similar to \cite[Corollary~3.9 (2)]{dkq}):

\begin{lem}\label{finite}
For any compact set $K\subset E^1$ there is a positive integer $n$ such that for all $v\in E^0$ the set $s\inv(v)\cap K$ has at most $n$ elements.
\end{lem}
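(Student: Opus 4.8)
The plan is to exploit the fact that $s$ is a local homeomorphism together with the compactness of $K$. The defining property of a local homeomorphism is that every point $e\in E^1$ admits an open neighborhood $U$ such that $s|_U$ is a homeomorphism onto an open subset of $E^0$; in particular $s|_U$ is injective, so each fiber $s\inv(v)$ meets $U$ in at most one point.

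First I would, for each $e\in K$, choose such an open neighborhood $U_e$ on which $s$ is injective. The family $\{U_e:e\in K\}$ is then an open cover of $K$. By compactness of $K$, I can extract a finite subcover, say $K\subseteq U_{e_1}\cup\cdots\cup U_{e_n}$ for some $e_1,\dots,e_n\in K$.

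Now fix any $v\in E^0$. Since $K\subseteq\bigcup_{i=1}^n U_{e_i}$, we have
\[
s\inv(v)\cap K\subseteq\bigcup_{i=1}^n\bigl(s\inv(v)\cap U_{e_i}\bigr),
\]
and each set $s\inv(v)\cap U_{e_i}$ contains at most one element because $s$ is injective on $U_{e_i}$. Hence $s\inv(v)\cap K$ has at most $n$ elements, and this bound is independent of $v$, as required.

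There is no real obstacle here: the argument is a routine compactness reduction from the local injectivity statement to a uniform global bound. The only point requiring care is to invoke the definition of local homeomorphism correctly, so as to secure injectivity of $s$ on each neighborhood $U_e$; once that is in hand, passing to a finite subcover immediately furnishes the desired uniform bound.
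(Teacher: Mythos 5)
Your argument is correct. It differs from the paper's proof in where the compactness is applied and in what local input is used: you cover the compact set $K$ inside $E^1$ by neighborhoods on which $s$ is injective (straight from the definition of a local homeomorphism), so that each fiber meets each patch at most once and the size of the finite subcover is itself the bound $n$. The paper instead sets $L=s(K)$, a compact subset of $E^0$, invokes Katsura's Lemma~1.4 to get, for each vertex $v$, a neighborhood $U_v$ and an integer $n_v$ with $|K\cap s\inv(U_v)|\le n_v$, and then covers $L$ by finitely many such $U_v$'s. Your version is more elementary and entirely self-contained, reducing the statement to the bare definition of local homeomorphism plus one compactness extraction; the paper's version is shorter on the page because the local finiteness work is delegated to the cited lemma, and it works with a locally uniform bound near each vertex rather than a pointwise injectivity statement near each edge. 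Both yield the same uniform bound, and there is no gap in your reasoning.
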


\begin{proof}
Let $L=s(K)$, a compact subset of $E^0$.
By \cite[Lemma~1.4]{Ka1}, each $v\in E^0$
has a neighborhood $U_v$ such that
for some positive integer $n_v$
the set $K\cap s\inv(U_v)$ has at most $n_v$ elements.
Covering $L$ by finitely many $U_v$'s gives the lemma.
\end{proof}

We will soon modify the $B$-correspondence $Y$ using a cocycle.
The following definition of cocycle generalizes that of \cite[Section~2]{EP}, where the authors consider discrete groups acting on finite graphs.

\begin{defn}\label{cocycle}
A \emph{cocycle} for the action of $G$ on the topological graph $E$ is a 
cocycle $\varphi$ for the action of $G$ on the edge space $E^1$
that also satisfies the \emph{vertex condition}
\begin{equation}\label{vertex condition}
\varphi(g,e)s(e)=gs(e)\righttext{for all}g\in G,e\in E^1.
\end{equation}
\end{defn}

In \cite[(2.3.1)]{EP} (for finite graphs), Exel and Pardo impose a stronger version of \eqref{vertex condition}, namely $\varphi(g,e)v=gv$ for all $g\in G$, $e\in E^1$, and $v\in E^0$; our weakened version above is all that is needed, and allows for greater flexibility.
For example, the elementary theory of cohomology for cocycles
(see \secref{cohom sec})
would be significantly hampered with the Exel-Pardo version.

\begin{rem}
Note that \eqref{vertex condition} implies that for all $(g,e)\in G\times E^1$ the product $g\inv \varphi(g,e)$ lies in the isotropy subgroup $G_{s(e)}$ of $G$ at the vertex $s(e)$. Thus, the existence of nontrivial cocycles, i.e., other than the map $(g,e)\mapsto g$, depends upon having nontrivial isotropy of the action on vertices.
\end{rem}

Let $\varphi$ be a cocycle for  the action of $G$ on $E$. We use $\varphi$ to modify the $B$-correspondence $Y$ as follows:
we keep the same structure as a Hilbert $B$-module,
as well as the same left $A$-module action determined by \eqref{left A two}.
In the following we 
use the technique of \eqref{induced action}
to define
an action of $G$ on the Hilbert $B$-module $Y$:
for $g\in G$ and $\xi\in C_c(E^1\times G)$ define the function $V_g\xi\in C_c(E^1\times G)$ by
\begin{equation}\label{V}
(V_g\xi)(e,h)=\xi(g\inv e,\varphi(g\inv,e)h).
\end{equation}

\begin{prop}\label{G varphi}
\begin{enumerate}
\item
The map $g\mapsto V_g$ given by \eqref{V} is a strong\-ly continuous unitary representation of $G$ on the Hilbert $B$-module $Y$.

\item
With $\pi$ as in \eqref{left A two} and $V$ as above, the pair $(\pi,V)$ is a covariant representation of the system $(A,G,\alpha)$ on the Hilbert $B$-module $Y$.
\end{enumerate}
\end{prop}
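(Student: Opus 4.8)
The plan is to read \eqref{V} as pullback of functions by the induced action of \eqref{induced action}. Writing $\sigma_g(e,h)=(ge,\varphi(g,e)h)$ for that action (with $S=E^1$, $T=G$), formula \eqref{V} says $V_g\xi=\xi\circ\sigma_{g\inv}$. Since $\sigma_{g\inv}$ is a homeomorphism of $E^1\times G$, each $V_g$ maps $C_c(E^1\times G)$ into itself, and because \eqref{induced action} really is an action (this is exactly what the cocycle identity buys us, as noted just after \eqref{induced action}), we get $V_gV_h\xi=\xi\circ\sigma_{h\inv}\circ\sigma_{g\inv}=\xi\circ\sigma_{(gh)\inv}=V_{gh}\xi$ and $V_1=\id$. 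In particular each $V_g$ is a bijection of $C_c(E^1\times G)$ with inverse $V_{g\inv}$.

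The core of (1) is that $V_g$ preserves the right Hilbert $B$-module structure. For the inner product I would substitute \eqref{V} into the two-variable formula for $\<\,\cdot\,,\,\cdot\,\>_B$ and carry out the joint change of variables $(e,h)\mapsto\sigma_{g\inv}(e,h)=(g\inv e,\varphi(g\inv,e)h)$. Three ingredients make this work: the cocycle identity in the form $\varphi(g\inv,ge)=\varphi(g,e)\inv$ (using $\varphi(1,\cdot)=1$); the vertex condition \eqref{vertex condition}, which turns the summation constraint $s(e)=hv$ into the corresponding constraint for the new variables; and left-invariance of Haar measure, which absorbs the $\varphi$-twist in the group variable. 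The result is $\<V_g\xi,V_g\eta\>_B=\<\xi,\eta\>_B$. A parallel computation — again using \eqref{vertex condition} to rewrite the source argument and left-invariance to shift the integration variable — gives $V_g(\xi\cdot b)=(V_g\xi)\cdot b$. Hence each $V_g$ is an inner-product-preserving, right $B$-linear bijection of the dense submodule $C_c(E^1\times G)$, so it extends to a surjective isometry of $Y$; such a map is automatically adjointable with $V_g^*=V_{g\inv}$, i.e.\ unitary. With the homomorphism property this proves $V$ is a unitary representation.

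For strong continuity, $\|V_g\|=1$ for every $g$, so by density it is enough to treat $\xi\in C_c(E^1\times G)$. As $g\to g_0$, joint continuity of the action and of $\varphi$ forces $V_g\xi=\xi\circ\sigma_{g\inv}$ to converge uniformly to $V_{g_0}\xi$, with all supports lying in one compact subset of $E^1\times G$ as $g$ ranges over a compact neighborhood of $g_0$; \lemref{ind lim} then promotes this inductive-limit convergence to $Y$-norm convergence. This finishes (1). For (2), since $\pi$ is a nondegenerate representation of $A$ and $V$ is a unitary representation of $G$, only the covariance relation $V_g\pi(a)=\pi(\alpha_g(a))V_g$ remains. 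On $\xi\in C_c(E^1\times G)$ it is immediate from \eqref{left A two} and \eqref{V}: evaluated at $(e,h)$ both sides equal $a(g\inv r(e))\,\xi(g\inv e,\varphi(g\inv,e)h)$, via $r(g\inv e)=g\inv r(e)$ and $\alpha_g(a)(r(e))=a(g\inv r(e))$. Notably this uses only the range map and the definition of $\alpha$, so $\varphi$ plays no role in covariance.

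The step I expect to be the main obstacle is the inner-product identity. Unlike the right-action identity it is not a single substitution in one integral: the twist $\varphi(g\inv,e)$ depends on the edge $e$ being summed, and the constraint $s(e)=hv$ couples the sum to the integral, so one cannot interchange them (generically the constraint singles out a null set of $h$). The clean way is to view $\int_G\sum_{s(e)=hv}(\,\cdot\,)\,dh$ as integration against the product of counting measure on each source fibre with Haar measure, and to check that $\sigma_{g\inv}$ is a measure-preserving bijection of this domain; the vertex condition \eqref{vertex condition} is precisely what makes $\sigma_{g\inv}$ respect the fibration by $s$, while left-invariance takes care of the Haar factor fibrewise. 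Everything else is either formal (the homomorphism and covariance relations) or a routine appeal to \lemref{ind lim}.
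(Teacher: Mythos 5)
Your proposal is correct and follows essentially the same route as the paper's proof: the homomorphism property via the cocycle identity (the paper likewise invokes ``the technique of \eqref{induced action}''), preservation of the inner product and right $B$-action via the change of variables governed by the cocycle identity, the vertex condition \eqref{vertex condition}, and left invariance of Haar measure, the appeal to the unitarity criterion for surjective isometric module maps, strong continuity via compactly supported uniform convergence plus \lemref{ind lim}, and the direct covariance check for (2). The only differences are presentational — the paper verifies $V_gV_h=V_{gh}$ by direct computation rather than through $\sigma_{g\inv}$, and it establishes the uniform convergence you assert by a subnet/contradiction argument — while your closing remark about reading $\int_G\sum_{s(e)=hv}$ as a measure on the fibred domain is a reasonable gloss on the substitution the paper performs more informally.
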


\begin{proof}
(1)
First note that the map $g\mapsto V_g$ is multiplicative from $G$ into the set of linear operators on $C_c(E^1\times G)$:
\begin{align*}
(V_gV_h\xi)(e,k)
&=(V_h\xi)\bigl(g\inv e,\varphi(g\inv,e)k\bigr)
\\&=\xi\bigl(h\inv g\inv e,\varphi(h\inv,g\inv e)\varphi(g\inv,e)k\bigr)
\\&=\xi\bigl(h\inv g\inv e,\varphi(h\inv g\inv,e)k\bigr)
\\&=(V_{gh}\xi)(e,k).
\end{align*}
Since $V_{1_G}\xi=\xi$ for all $\xi\in C_c(E^1\times G)$
(where $1_G$ here denotes the identity element of $G$),
we deduce that $V$ is a homomorphism from $G$ to the group of invertible linear operators on $C_c(E^1\times G)$.

Now we show that 
 the inner products on $C_c(E^1\times G)$ are preserved by each $V_g$:
\begin{align*}
&\<V_g\xi,V_g\eta\>_B(v,h)
\\&\quad=\int_G \sum_{s(e)=kv}\bar{(V_g\xi)(e,k)}(V_g\eta)(e,kh)\,dk
\\&\quad=\int_G \sum_{s(e)=kv}
\bar{\xi(g\inv e,\varphi(g\inv,e)k)}\eta(g\inv e,\varphi(g\inv,e)kh)\,dk
\\&\quad=\int_G \sum_{s(e)=kv}
\bar{\xi(e,k)}\eta(e,kh)\,dk
\intertext{\big[after $e\mapsto ge$ and $k\mapsto \varphi(g\inv,e)\inv k$, since a short computation using the cocycle identity and \eqref{vertex condition} shows that $s(e)=kv$ if and only if $s(g\inv e)=\varphi(g\inv,e)\inv kv$\big]
}
&\quad=\<\xi,\eta\>_B(v,h).
\end{align*}
In particular, $V_g$ is isometric on $C_c(E^1\times G)$, and hence extends uniquely to an isometry on the completion $Y$; moreover, since $V_g$  maps $C_c(E^1\times G)$ onto itself, this extension, which we continue to denote by $V_g$, is in fact an isometric linear map of $Y$ onto itself.
Upon taking limits we see that these extensions still satisfy $V_gV_h=V_{gh}$ for all $g,h\in G$.

For $b\in C_c(E^0\times G)$ we have
\begin{align*}
&\bigl(V_g(\xi\cdot b)\bigr)(e,h)
\\&\quad=(\xi\cdot b)(g\inv e,\varphi(g\inv,e)h)
\\&\quad=\int_G \xi(g\inv e,k)b\bigl(k\inv s(g\inv e),k\inv \varphi(g\inv,e)h\bigr)\,dk
\\&\quad=\int_G \xi(g\inv e,\varphi(g\inv,e)k)b\bigl(k\inv \varphi(g\inv,e)\inv s(g\inv e),k\inv h\bigr)\,dk
\\&\hspace{1in}\text{(after $k\mapsto \varphi(g\inv ,e)k$)}
\\&\quad=\int_G \xi(g\inv e,\varphi(g\inv,e)k)b\bigl(k\inv \varphi(g,g\inv e)s(g\inv e),k\inv h\bigr)\,dk
\\&\quad=\int_G \xi(g\inv e,\varphi(g\inv,e)k)b\bigl(k\inv gs(g\inv e),k\inv h\bigr)\,dk
\righttext{(by \eqref{vertex condition})}
\\&\quad=\int_G (V_g\xi)(e,k)b\bigl(k\inv s(e),k\inv h\bigr)\,dk
\\&\quad=\bigl((V_g\xi)\cdot b\bigr)(e,h).
\end{align*}
Thus by continuity the map $V_g$ on $Y$ is right $B$-linear,
and this combined with its other properties makes it a unitary operator on the Hilbert $B$-module $Y$ \cite[Theorem~3.5]{lance}.

For the strong continuity,
by uniform boundedness
it suffices to show that if $\xi\in C_c(E^1\times G)$ and $g_i\to 1$ in $G$ then $\|V_{g_i}\xi-\xi\|\to 0$.
Arguing by contradiction, we can replace $\{g_i\}$ by a subnet and relabel so that no subnet of $\{\|V_{g_i}\xi-\xi\|\}$ converges to 0.
Again replacing $\{g_i\}$ by a subnet, we can suppose that the $g_i$'s are all contained in some compact neighborhood $U$ of 1.
It then follows from continuity of the operations, and of the function $\varphi$, that the supports of the functions $V_{v_i}\xi$'s are all contained in some fixed compact set $K\subset E^1\times G$.
Then by \lemref{ind lim} it suffices to show that
$V_{g_i}\xi\to \xi$ uniformly.
Arguing by contradiction, we can replace by a subnet so that no subnet of $V_{g_i}\xi$ converges uniformly to $\xi$.
Then we can find $\epsilon>0$ such that,
after again replacing by a subnet,
for all $i$ there exists $(e_i,h_i)\in E^1\times G$ such that
\[
|V_{g_i}\xi(e_i,h_i)-\xi(e_i,h_i)|\ge \epsilon.
\]
In particular, we must have $(e_i,h_i)\in K$ for all $i$,
so that after replacing by a subnet again we have $(e_i,h_i)\to (e,h)$ for some $(e,h)\in E^1\times G$.
But then by continuity we have
\[
\epsilon\le |V_{g_i}\xi(e_i,h_i)-\xi(e_i,h_i)|\to |\xi(e,h)-\xi(e,h)|=0,
\]
which is a contradiction.

(2)
It suffices to show that for all $g\in G$, $a\in A$, and $\xi\in C_c(E^1\times G)$ we have
$V_g\pi(a)\xi=\pi(\alpha_g(a))V_g\xi$,
and we check this by evaluating at an arbitrary pair $(e,h)\in E^1\times G$:
\begin{align*}
(V_g\pi(a)\xi)(e,h)
&=(\pi(a)\xi)\bigl(g\inv e,\varphi(g\inv,e)h\bigr)
\\&=a\bigl(r(g\inv e)\bigr)\xi\bigl(g\inv e,\varphi(g\inv,e)h\bigr)
\\&=a\bigl(g\inv r(e)\bigr)(V_g\xi)(e,h)
\\&=\alpha_g(a)(r(e))(V_g\xi)(e,h)
\\&=\bigl(\pi(\alpha_g(a))V_g\xi\bigr)(e,h).
\qedhere
\end{align*}
\end{proof}

\begin{defn}
The integrated form of the covariant representation $(\pi,V)$ of \propref{G varphi} is a nondegenerate representation  of $B$ in $\LL(Y)$, giving $Y$ the structure of a $B$-correspondence
that
we
denote by $\yphi$. 
\end{defn}

\begin{ex}
For the special cocycle $(g,e)\mapsto g$ the correspondence $\yphi$ reduces to the crossed product $Y=X\rtimes_\gamma G$.
\end{ex}

It will be useful to handle the left module action of $B$ on $\yphi$ in terms of two-variable functions:
for $b\in C_c(E^0\times G)\subset B$ and $\xi\in C_c(E^1\times G)\subset \yphi$,
the function $b\cdot \xi\in C_c(E^1\times G)$ is given by
\begin{align*}
(b\cdot \xi)(e,g)
&=\bigl(\int_G \bigl(i_A(b(h))i_G(h)\cdot \xi\bigr)\,dh\bigr)(e,g)
\\&=\int_G \bigl(i_A(b(h))i_G(h)\cdot \xi\bigr)(e,g)\,dh
\righttext{by \lemref{ind lim}}
\\&=\int_G b(h)(r(e))\bigl(i_G(h)\cdot \xi\bigr)(e,g)\,dh
\righttext{by \eqref{left A two}}
\\&=\int_G b(r(e),h)(V_h\xi)(e,g)\,dh
\\&=\int_G b(r(e),h)\xi(h\inv e,\varphi(h\inv,e)g)\,dh.
\end{align*}

Following \cite{Kat04} we can associate two $C^*$-algebras to the correspondence $\yphi$:  the Toeplitz algebra  $\ty$ and the Cuntz-Pimsner algebra  $\oy$. It will follow from Corollary \ref{universal O} (or Corollary \ref{alt EP alg}) that $\oy$ is isomorphic to the $C^*$-algebra $\OO_{G, E}$ associated to $(E, G, \varphi)$ by Exel and Pardo in \cite{EP} in the case where $E$ is finite and sourceless, $G$ is discrete and $\varphi$ satisfies $\varphi(g, e) v = v$ for all $(g, e)\in G\times E^1$ and $v\in E^0$.

\begin{defn}
We will call $\oy$ the {\it Exel-Pardo algebra} associated to the system $(E, G, \varphi)$.
\end{defn}

\begin{rem}
Exel and Pardo show that in the case they consider, $\OO_{G, E}$ is nuclear whenever $G$ is amenable (cf.\ \cite[Corollary 10.12]{EP}). In our more general context, assume that the action of the locally compact group $G$ on the locally compact Hausdorff space $E^0$ is amenable in the sense of Anantharaman-Delaroche (see \cite[Section 2]{AD02}). Then $B=C_0(E^0)\rtimes_\alpha G$ is nuclear \cite[Theorem 5.3]{AD02}, and it then follows from \cite[Theorem 7.2 and Corollary 7.4]{Kat04} that $\ty$ and $\oy$ are nuclear. If $G$ is discrete, then $B$ is nuclear if and only if the action of $G$ on $E^0$ is amenable, cf.\ \cite[Th{\'e}or{\`e}me 4.5]{AD87}. Hence, when $G$ is discrete,  \cite[Theorem 7.2]{Kat04} gives that $\ty$ is nuclear if and only if $G$ acts amenably on $E^0$.
\end{rem}

\begin{rem}
By \cite[Proposition~3.2]{enchilada},
there is also a $C^*$-cor\-res\-pon\-dence $Y_r:=X\rtimes_{\gamma,r} G$ over the reduced crossed product $B_r:=C_0(E^0)\rtimes_{\alpha, r} G$,
which contains $C_c(G,X)$ as a dense subspace and is constructed in a similar way as $Y$. To be able to talk about the reduced $C^*$-correspondence $Y_r^\varphi$, i.e., to define a left action of $B_r$ on $Y_r$ involving $\varphi$, one will have to find out if $ \pi \times V$ factors through $B_r$ in general.
If $G$ acts amenably on $E^0$, then $B=B_r$ (cf.\ \cite[Theorem 5.3]{AD02}), so the problem does not show up in this case.
\end{rem}

\begin{rem}\label{category}
In \cite[Section~2]{EP}, Exel and Pardo show how to extend the action and cocycle to the set $E^*$ of finite paths, and it is clear that their proof works whenever $E$ is a directed graph and $G$ is discrete. 
We see a way to carry this further,
to form a sort of 
Zappa-Sz{\'e}p product of $E^*$ by $G$ with respect to $\varphi$,
and thereby obtain a new category of paths
$E^*\rtimes^{\varphi} G$
in the sense of Spielberg \cite{Spi11},
except that right cancellativity will not hold in general,
and a little bit of work is necessary to force the category to have no inverses.
Several natural questions arise:
is the algebra $C^*(E^*\rtimes^{\varphi} G)$ that Spielberg's theory associates to this category of paths
isomorphic (or related) to the Toeplitz algebra $\ty$?
And then is a suitable quotient of $C^*(E^*\rtimes^\varphi G)$ isomorphic to the Cuntz-Pimsner algebra $\oy$?
We plan to pursue this in subsequent work.
\end{rem}

\section{Cohomology for graph cocycles}\label{cohom sec}

Throughout this section $G$ will be a locally compact group acting on a topological graph $E$.

\begin{defn}
Let $\varphi,\varphi'$ be cocycles for the action $G\act E$.
We say $\varphi$ and $\varphi'$ are \emph{cohomologous} if
there is a continuous function $\psi\:E^1\to G$ such that for all $g\in E$ and $e\in E^1$ we have
\begin{align}
\varphi'(g,e)&=\psi(ge)\varphi(g,e)\psi(e)\inv\label{cohomologous graph}
\\
\psi(e)s(e)&=s(e).\label{cochain}
\end{align}
\end{defn}

Note that \eqref{cohomologous graph} just says that $\varphi$ and $\varphi'$ are cohomologous as cocycles for the action $G\act E^1$.
The extra condition \eqref{cochain} is necessary to make the theory work for actions on topological graphs.

\begin{lem}\label{is a cocycle}
If $\varphi$ is a cocycle for the action of $G$ on the topological graph $E$
and $\psi\:E^1\to G$ is a continuous map satisfying \eqref{cochain},
then the map $\varphi'\:E^1\times G\to G$ defined by \eqref{cohomologous graph} is also a cocycle for the action of $G$ on $E$.
\end{lem}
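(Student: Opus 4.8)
The plan is to verify the two requirements in \defnref{cocycle}: that $\varphi'$ is a cocycle for the action $G\act E^1$, and that it satisfies the vertex condition \eqref{vertex condition}.

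The first requirement is already handled by the general theory recalled in \secref{prelims}. Indeed, \eqref{cohomologous graph} is precisely the relation \eqref{cohomologous} for the $G$-space $E^1$ and the continuous map $\psi$, and it was observed there that the map so defined is again a cocycle for $G\act E^1$; its continuity is immediate from that of $\psi$, $\varphi$, and the action. So nothing new needs to be argued on this point.

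The substance of the lemma is therefore the vertex condition for $\varphi'$, which I would check by a direct computation of $\varphi'(g,e)s(e)$ starting from \eqref{cohomologous graph}. First, from \eqref{cochain} at the edge $e$ we have $\psi(e)s(e)=s(e)$, whence also $\psi(e)\inv s(e)=s(e)$, so the rightmost factor $\psi(e)\inv$ may be discarded. Next, the vertex condition \eqref{vertex condition} for $\varphi$ replaces $\varphi(g,e)s(e)$ by $gs(e)$. Finally, since $G$ acts by graph automorphisms we have $s(ge)=gs(e)$, and applying \eqref{cochain} at the translated edge $ge$ gives $\psi(ge)s(ge)=s(ge)$, i.e.\ $\psi(ge)\,gs(e)=gs(e)$. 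Chaining these three observations yields
\[
\varphi'(g,e)s(e)=\psi(ge)\varphi(g,e)\psi(e)\inv s(e)=\psi(ge)\,gs(e)=gs(e),
\]
which is exactly \eqref{vertex condition} for $\varphi'$.

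The computation is entirely formal, so there is no serious obstacle; the only point requiring attention is that \eqref{cochain} must be invoked twice --- once at $e$ to eliminate $\psi(e)\inv$ on the right, and once at the translated edge $ge$ to eliminate $\psi(ge)$ on the left --- and that the second use relies on the $G$-equivariance $s(ge)=gs(e)$ of the source map. This, incidentally, is exactly what motivates the inclusion of the auxiliary condition \eqref{cochain} in the definition of cohomologous graph cocycles, since \eqref{cohomologous graph} alone would not preserve the vertex condition.
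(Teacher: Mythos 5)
Your argument is correct and follows essentially the same route as the paper: the cocycle identity for $\varphi'$ is deferred to the general theory of \secref{prelims}, and the vertex condition is verified by the same three-step chain --- discarding $\psi(e)\inv$ via \eqref{cochain} at $e$, applying \eqref{vertex condition} for $\varphi$, and then eliminating $\psi(ge)$ via \eqref{cochain} at $ge$ together with $s(ge)=gs(e)$. No gaps.
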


\begin{proof}
As we mentioned above, the cocycle identity 
holds for $\varphi'$ by the standard theory of actions on spaces (and is a routine computation).
We verify \eqref{vertex condition}:
\begin{align*}
\varphi'(g,e)s(e)
&=\psi(ge)\varphi(g,e)\psi(e)\inv s(e)
\\&=\psi(ge)\varphi(g,e)s(e)
\\&=\psi(ge)gs(e)
\\&=\psi(ge)s(ge)
\\&=s(ge)
\\&=gs(e).
\qedhere
\end{align*}
\end{proof}

In the general theory of cocycles for actions on spaces,
the constant function $(g,e)\mapsto 1$ is a cocycle (where 1 here denotes the identity element of $G$). But not necessarily for the action of $G$ on the topological graph $E$:

\begin{lem}\label{fix}
For an action of $G$ on a topological graph $E$, the following are equivalent:
\begin{enumerate}
\item The constant function $(g,e)\mapsto 1$ is a cocycle for the action $G\curvearrowright E$.

\item 
$gs(e)=s(e)$ for all $(g,e)\in G\times E^1$.

\item 
In \defnref{cocycle}, the axiom \eqref{vertex condition} is redundant.
\end{enumerate}
\end{lem}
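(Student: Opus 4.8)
The plan is to establish the cycle of implications (1)$\iff$(2), (2)$\then$(3), and (3)$\then$(1), which together give the full equivalence.

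For (1)$\iff$(2), the key observation is that the constant function $(g,e)\mapsto 1$ automatically satisfies the cocycle identity for the action $G\act E^1$ (it is the cocycle attached to the trivial homomorphism $G\to G$). Hence, by \defnref{cocycle}, this constant function is a cocycle for the action $G\act E$ if and only if it satisfies the vertex condition \eqref{vertex condition}. Writing out \eqref{vertex condition} for $\varphi\equiv 1$ gives $s(e)=gs(e)$ for all $(g,e)\in G\times E^1$, which is exactly (2). So (1)$\iff$(2) is immediate once this reduction is made.

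The one point needing care is to read (3) precisely: \eqref{vertex condition} is \emph{redundant} means that every continuous $\varphi\:G\times E^1\to G$ satisfying the cocycle identity for $G\act E^1$ already satisfies \eqref{vertex condition}. Granting this reading, I would prove (2)$\then$(3) by taking an arbitrary such $\varphi$ and applying the hypothesis (2) to the two group elements $\varphi(g,e)$ and $g$ in turn, obtaining $\varphi(g,e)s(e)=s(e)=gs(e)$; thus \eqref{vertex condition} holds automatically. Finally, for (3)$\then$(1) I would simply recall that $\varphi\equiv 1$ is always a cocycle for $G\act E^1$, so if the vertex condition is redundant then $\varphi\equiv 1$ is in particular a cocycle for $G\act E$, which is (1). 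Since every step is a one-line verification, there is no real obstacle beyond fixing the correct interpretation of statement (3).
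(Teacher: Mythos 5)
Your proposal is correct and follows essentially the same route as the paper, which also closes the cycle (2)$\then$(3)$\then$(1)$\then$(2) using exactly the observations you make: the constant function $1$ always satisfies the cocycle identity on $E^1$, so (1) reduces to the vertex condition reading $s(e)=gs(e)$, and under (2) any cocycle satisfies $\varphi(g,e)s(e)=s(e)=gs(e)$. Your treatment is slightly more explicit about the interpretation of (3), but the argument is the same.
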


\begin{proof}
(2) trivially implies (3), which in turn trivially implies (1).
Assume (1).
Then for all $g\in E$ and $e\in E^1$ we have
\[
gs(e)=1_Gs(e)=s(e),
\]
giving (2).
\end{proof}

\begin{defn}
We say that an action of $G$ on a topological graph $E$
\emph{fixes sources} if it
satisfies the equivalent conditions (1)--(3) in Lem\-ma~\ref{fix}.
\end{defn}

\begin{cor}\label{trivial}
If the action $G\curvearrowright E$ 
fixes sources,
and
if $\psi\:E^1\to G$ is a continuous map satisfying \eqref{cochain}, then the map $\varphi\:G\times E^1\to G$ defined by
\begin{equation}\label{coboundary}
\varphi(g,e)=\psi(ge)\psi(e)\inv
\end{equation}
is a cocycle for the action $G\curvearrowright E$.
\end{cor}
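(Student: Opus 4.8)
The plan is to recognize the map \eqref{coboundary} as the special case of the cohomologous-cocycle construction \eqref{cohomologous graph} in which the base cocycle is the trivial one $(g,e)\mapsto 1$. Once this is observed, the corollary follows by combining \lemref{fix} and \lemref{is a cocycle}, with no new computation required.

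Concretely, I would first invoke \lemref{fix}. Since the action $G\act E$ fixes sources, condition (1) of \lemref{fix} holds, so the constant function $\varphi_0(g,e)=1$ is a genuine cocycle for the action of $G$ on the topological graph $E$ --- that is, it satisfies both the cocycle identity for the action on $E^1$ and the vertex condition \eqref{vertex condition}. This is precisely the point at which the ``fixes sources'' hypothesis is used: without it, the trivial cocycle need not satisfy \eqref{vertex condition}, and the construction below would not be available.

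Next I would apply \lemref{is a cocycle} with this $\varphi_0$ playing the role of $\varphi$ and the given $\psi$ playing the role of $\psi$. Since $\psi$ is continuous and satisfies \eqref{cochain} by hypothesis, \lemref{is a cocycle} guarantees that the map
\[
(g,e)\mapsto \psi(ge)\varphi_0(g,e)\psi(e)\inv=\psi(ge)\psi(e)\inv
\]
is again a cocycle for the action of $G$ on $E$. But this map is exactly the $\varphi$ defined by \eqref{coboundary}, so the corollary follows; in particular its continuity and its cocycle identity are subsumed in the conclusion of \lemref{is a cocycle}.

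I do not expect any real obstacle here: the substance is entirely carried by the two preceding lemmas, and the only subtlety is the bookkeeping observation that $\varphi_0\equiv 1$ qualifies as a graph cocycle exactly when sources are fixed. Should one prefer a self-contained argument, one could instead verify \eqref{vertex condition} directly for $\varphi(g,e)=\psi(ge)\psi(e)\inv$ by computing $\psi(ge)\psi(e)\inv s(e)=\psi(ge)s(e)=\psi(ge)s(ge)=s(ge)=gs(e)$, using \eqref{cochain} twice and the fixes-sources condition at the last step --- but routing through \lemref{fix} and \lemref{is a cocycle} is cleaner and avoids repeating the calculation already carried out in the proof of \lemref{is a cocycle}.
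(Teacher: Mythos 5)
Your proposal is correct and matches the paper's intent exactly: the paper states this as an unproved corollary, precisely because it follows by feeding the trivial cocycle $\varphi_0\equiv 1$ (a genuine graph cocycle by \lemref{fix}, thanks to the fixes-sources hypothesis) into \lemref{is a cocycle}. Your optional direct verification of \eqref{vertex condition} is also sound.
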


\begin{defn}
If the action $G\curvearrowright E$ fixes sources,
a cocycle $\varphi$ as in 
\eqref{coboundary}
is a \emph{coboundary} for the action $G\curvearrowright E$.
\end{defn}

\begin{rem}
Thus, when the action $G\curvearrowright E$ fixes sources,
coboundaries are precisely the cocycles that are cohomologous to the cocycle taking the constant value $1$.
\end{rem}

Cohomologous cocycles give isomorphic correspondences:

\begin{thm} \label{coho cocy}
If $\varphi$ and $\varphi'$ are cohomologous cocycles for the action $G\curvearrowright E$, then the $B$-correspondences $\yphi$ and $Y^{\varphi'}$ are isomorphic.
\end{thm}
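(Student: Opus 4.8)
The plan is to exhibit an explicit unitary $W$ on the underlying Hilbert $B$-module $Y$ that carries the left action of $\yphi$ to that of $Y^{\varphi'}$. Recall that $\yphi$ and $Y^{\varphi'}$ share the same right Hilbert $B$-module structure and the same left $A$-action $\pi$ of \eqref{left A two}, and differ only through the unitary representations $V^\varphi$ and $V^{\varphi'}$ of \eqref{V}. Hence it suffices to produce a Hilbert $B$-module automorphism $W$ of $Y$ that commutes with $\pi$ and satisfies $W V^\varphi_g = V^{\varphi'}_g W$ for all $g$: such a $W$ intertwines the integrated forms of the covariant pairs $(\pi,V^\varphi)$ and $(\pi,V^{\varphi'})$, i.e.\ the two left $B$-actions, and therefore is an isomorphism of $B$-correspondences.

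Choose a continuous $\psi\:E^1\to G$ implementing the cohomology, so that \eqref{cohomologous graph} and \eqref{cochain} hold. Guided by the conjugating homeomorphism $\theta(x,t)=(x,\psi(x)t)$ of the induced actions described in \secref{prelims}, I would define $W$ on the dense subspace $C_c(E^1\times G)$ by
\[
(W\xi)(e,h)=\xi\bigl(e,\psi(e)\inv h\bigr),
\]
with evident inverse $\zeta\mapsto\zeta(e,\psi(e)h)$. Since $\psi$ is continuous, $(e,h)\mapsto(e,\psi(e)\inv h)$ is a homeomorphism of $E^1\times G$, so $W$ maps $C_c(E^1\times G)$ bijectively onto itself and preserves the inductive limit topology.

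There are then three things to verify. First, $W$ respects the right $B$-action, $W(\xi\cdot b)=(W\xi)\cdot b$, and preserves the inner product, $\<W\xi,W\eta\>_B=\<\xi,\eta\>_B$; both follow from the substitution $h\mapsto\psi(e)h$ in the defining integrals, exactly mirroring the computation in \propref{G varphi}(1). This shows $W$ extends to an isometry of $Y$, and since it is onto it is a Hilbert $B$-module automorphism. Second, $W\pi(a)=\pi(a)W$ is immediate, because $\pi(a)$ merely multiplies by $a(r(e))$, a factor untouched by $W$. Third, evaluating at $(e,h)$ gives $W V^\varphi_g\xi=V^{\varphi'}_g W\xi$ precisely by \eqref{cohomologous graph} applied at $g\inv$ (the identity $\varphi(g\inv,e)\psi(e)\inv=\psi(g\inv e)\inv\varphi'(g\inv,e)$ is exactly what is needed).

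The main obstacle is the measure-theoretic bookkeeping in the inner-product and right-module computations: the translation $h\mapsto\psi(e)h$ depends on the edge $e$ being summed over, so one must argue, as in \propref{G varphi}, that it defines a measure-preserving bijection of the constraint set $\{(e,h):s(e)=hv\}$ onto itself. This is exactly where the cochain condition \eqref{cochain} is indispensable: since $\psi(e)$ fixes $s(e)$, the conditions $s(e)=\psi(e)hv$ and $s(e)=hv$ coincide, so the constraint set is carried to itself, and left-invariance of Haar measure handles the $h$-variable. Everything else is routine verification paralleling computations already carried out in \secref{withphi}.
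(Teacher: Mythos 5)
Your proposal is correct and follows essentially the same route as the paper: the unitary $W$ is exactly the paper's map $\Phi$, the inner-product and right-module computations use the same substitution $h\mapsto\psi(e)h$ justified by the cochain condition \eqref{cochain}, and the cohomology identity \eqref{cohomologous graph} at $g\inv$ gives the left-module intertwining. The only (cosmetic) difference is that you verify left $B$-linearity at the level of the covariant pair $(\pi,V)$ and then integrate, whereas the paper computes directly with the integrated form $(b\cdot\xi)(e,g)=\int_G b(r(e),h)\xi(h\inv e,\varphi(h\inv,e)g)\,dh$.
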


\begin{proof}
Let $\varphi'(e,g)=\psi(eg)\varphi(e,g)\psi(e)\inv$ for a continuous map $\psi:E^1\to G$ satisfying \eqref{cochain}.
We will construct an isomorphism $\Phi\:Y^{\varphi}\to Y^{\varphi'}$.
To begin, we define $\Phi$ as a linear map on $C_c(E^1\times G)$ by
\[
(\Phi\xi)(e,g)=\xi(e,\psi(e)\inv g)\righttext{for}\xi\in C_c(E^1\times G).
\]
We show that $\Phi$ preserves inner products:
\begin{align*}
\<\Phi\xi,\Phi\eta\>(v,g)
&=\int_G \sum_{s(e)=hv} \bar{(\Phi\xi)(e,h)}(\Phi\eta)(e,hg)\,dh
\\&=\int_G \sum_{s(e)=hv} \bar{\xi(e,\psi(e)\inv h)}(\eta(e,\psi(e)\inv hg)\,dh
\\&=\int_G \sum_{s(e)=hv} \bar{\xi(e,h)}(\eta(e,hg)\,dh,
\intertext{after $h\mapsto \psi(e)h$, since by \eqref{cochain} $s(e)=hv$ if and only if $s(e)=\psi(e)hv$,}
&=\<\xi,\eta\>(v,g).
\end{align*}
Thus $\Phi$ extends uniquely to an isometric linear operator on the Hilbert $B$-module $Y$.
The following computation implies that $\Phi$ is right $B$-linear:
for $\xi\in C_c(E^1\times G)$ and $b\in C_c(E^0\times G)$ we have
\begin{align*}
\bigl(\Phi(\xi\cdot b)\bigr)(e,g)
&=(\xi\cdot b)(e,\psi(e)\inv g)
\\&=\int_G \xi(e,h)b(h\inv s(e),h\inv \psi(e)\inv g)\,dh
\\&=\int_G \xi(e,\psi(e)\inv h)b(h\inv \psi(e)s(e),h\inv g)\,dh,
\\&\hspace{1in}\text{after $h\mapsto \psi(e)\inv h$}
\\&=\int_G \xi(e,\psi(e)\inv h)b(h\inv s(e),h\inv g)\,dh,
\righttext{(by \eqref{cochain})}
\\&=\int_G (\Phi\xi)(e,h)b(h\inv s(e),h\inv g)\,dh
\\&=\bigl((\Phi\xi)\cdot b\bigr)(e,g).
\end{align*}
This combined with the other properties of $\Phi$ makes it a unitary map from the Hilbert $B$-module $\yphi$ to the Hilbert $B$-module $Y^{\varphi'}$
\cite[Theorem~3.5]{lance}.

Then the following computation implies that $\Phi$ is left $B$-linear,
from which the theorem will follow:
\begin{align*}
\bigl(\Phi(b\cdot \xi)\bigr)(e,g)
&=(b\cdot \xi)(e,\psi(e)\inv g)
\\&=\int_G b(r(e),h)\xi(h\inv e,\varphi(h\inv,e) \psi(e)\inv g)\,dh
\\&=\int_G b(r(e),h)\xi(h\inv e,\psi(h\inv e)\inv \varphi'(h\inv,e)g)\,dh
\\&=\int_G b(r(e),h)(\Phi\xi)(h\inv e,\varphi'(h\inv,e)g)\,dh
\\&=\bigl(b\cdot (\Phi\xi)\bigr)(e,g).
\qedhere
\end{align*}
\end{proof}

As an immediate consequence of \thmref{coho cocy}, we get:
\begin{cor}\label{cohomology conjugate 2} 
Assume that  $G$ also acts on another topological graph $F=(F^0, F^1, r', s')$ and that   $\varphi$ and $\varphi'$ are cocycles for $G\act E$ and $G\act F$, respectively. If $(E, G, \varphi)$ and $(F, G, \varphi')$ are
\emph{cohomology conjugate} in the sense that 
there exist $G$-equivariant homeomorphisms $\theta_j\:F^j\to E^j$ for $j=0,1$
such that $r\circ \theta_1= \theta_0\circ r'$, $s\circ \theta_1= \theta_0\circ s'$, and the map $(g,f) \to \varphi(g, \theta_1(f))$ is a cocycle for $G\act F$ that is cohomologous to $\varphi'$,
then $\yphi$ is isomorphic to $Y^{\varphi'}$, and it follows that $\TT_{\yphi}$ \(resp.\  $\OO_{\yphi}$\) is isomorphic to $\TT_{Y^{\varphi'}}$ \(resp.\ $\OO_{Y^{\varphi'}}$\).
\end{cor}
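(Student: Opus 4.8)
The plan is to reduce to \thmref{coho cocy} by transporting all of the data for the system $(F,G,\varphi')$ to the system $(E,G,\varphi)$ along the $G$-equivariant graph isomorphism $(\theta_0,\theta_1)$, and then applying \thmref{coho cocy} on the graph $F$.

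First I would observe that, because $\theta_0\:F^0\to E^0$ is a $G$-equivariant homeomorphism, pullback $\theta_0^*a=a\circ\theta_0$ is a $G$-equivariant $*$-isomorphism $C_0(E^0)\to C_0(F^0)$ and hence induces a $*$-isomorphism $\beta\:B\to B':=C_0(F^0)\rtimes_{\alpha'}G$, given on two-variable functions by $\beta(b)(w,h)=b(\theta_0(w),h)$. Next, using the identities $r\circ\theta_1=\theta_0\circ r'$ and $s\circ\theta_1=\theta_0\circ s'$ together with the bijectivity of $\theta_1$, I would check that pullback $\theta_1^*x=x\circ\theta_1$ is an isomorphism of graph correspondences $X_E\to X_F$ compatible with $\theta_0^*$: it intertwines the left and right $C_0(E^0)$-actions with the $C_0(F^0)$-actions, and satisfies $\theta_0^*\langle x,y\rangle=\langle\theta_1^*x,\theta_1^*y\rangle$ (the fibre sum over $s(e)=\theta_0(w)$ being reindexed by $e=\theta_1(f)$, $s'(f)=w$). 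Since $\theta_1$ is $G$-equivariant, $\theta_1^*$ also intertwines the actions $\gamma$ and $\gamma'$; it then follows (either by direct computation on $C_c(E^1\times G)$, or by the naturality of the construction of \cite[Proposition~3.5]{taco}) that $\theta_1^*$ induces a $\beta$-compatible correspondence isomorphism $\Psi\:Y\to Y':=X_F\rtimes_{\gamma'}G$, given by $(\Psi\xi)(f,g)=\xi(\theta_1(f),g)$.

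The crux of the argument is to verify that this same $\Psi$ carries the cocycle-deformed correspondence $\yphi$ onto $Y'^{\psi}$, where $\psi(g,f):=\varphi(g,\theta_1(f))$ is the transported cocycle. Since $\Psi$ already respects the right module structure and the $B$-valued inner product (these being undeformed), it remains only to intertwine the left actions. Using the two-variable formula $(b\cdot\xi)(e,g)=\int_G b(r(e),h)\xi(h\inv e,\varphi(h\inv,e)g)\,dh$ for the left action on $\yphi$, together with $r(\theta_1(f))=\theta_0(r'(f))$ and $h\inv\theta_1(f)=\theta_1(h\inv f)$, a direct computation yields $\Psi(b\cdot\xi)=\beta(b)\cdot\Psi\xi$, where the right-hand side is precisely the left action of $B'$ on $Y'^{\psi}$. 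Hence $\yphi\cong Y'^{\psi}$.

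Finally, since by hypothesis $\psi$ is cohomologous to $\varphi'$ as cocycles for $G\act F$, applying \thmref{coho cocy} on the graph $F$ gives $Y'^{\psi}\cong Y^{\varphi'}$; composing with $\Psi$ yields $\yphi\cong Y^{\varphi'}$. The assertion about the Toeplitz and Cuntz-Pimsner algebras is then immediate, since isomorphic correspondences (over isomorphic coefficient algebras) have isomorphic $\TT$- and $\OO$-algebras. The only real obstacle is the middle step: confirming that the naive pullback $\Psi$ transports $\varphi$ to $(g,f)\mapsto\varphi(g,\theta_1(f))$ at the level of the deformed left action, which requires tracking how the $G$-equivariance of $\theta_1$ interacts with the $\varphi$-twist inside the convolution integral. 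Everything else is routine bookkeeping or a direct appeal to \thmref{coho cocy}.
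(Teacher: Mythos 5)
Your proposal is correct and follows essentially the same route as the paper, which states this corollary as an immediate consequence of \thmref{coho cocy}: the implicit argument is precisely your two-step factorization, first transporting the correspondence along the $G$-equivariant graph isomorphism $(\theta_0,\theta_1)$ (a routine pullback computation, which you rightly identify as the only part needing verification against the deformed left action), and then invoking \thmref{coho cocy} on $F$ for the cohomologous cocycles $\psi$ and $\varphi'$. The paper simply omits the bookkeeping that you have written out.
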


In the following proposition we consider the questions of whether the cocycle $(g,e)\mapsto g$ can be a coboundary for a graph action.

\begin{prop}\label{g cby}
Suppose the action $G\curvearrowright E$ fixes sources.
Then the following are equivalent:
\begin{enumerate}
\item
The cocycle $(g,e)\mapsto g$ is a coboundary.

\item
There is a
continuous map $\psi\:E^1\to G$ such that
for all $g\in G$ and $e\in E^1$
we have
\begin{align*}
\psi(ge)&=g\psi(e).
\end{align*}

\item
$E^1$ is $G$-equivariantly homeomorphic to 
$G\times \Omega$ for some space $\Omega$, where $G$ acts by left translation in the first factor.
\end{enumerate}
\end{prop}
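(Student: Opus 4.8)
The plan is to reduce everything to \lemref{g cbdy} applied to the $G$-space $S=E^1$, with only one extra observation—specific to graph actions—needed to dispose of the cochain condition \eqref{cochain}. Observe first that conditions (2) and (3) make no reference to the graph structure whatsoever: they are precisely conditions (2) and (3) of \lemref{g cbdy} with $S=E^1$ and $R=\Omega$. Since $(g,e)\mapsto g$ is always a cocycle for the action $G\act E^1$, the hypotheses of \lemref{g cbdy} are satisfied, and its equivalence of (2) and (3) gives (2)$\iff$(3) here directly. So the real work is to connect condition (1) to condition (2).

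First I would record the one place where the fixes-sources hypothesis enters. By condition (2) of \lemref{fix}, $gs(e)=s(e)$ for all $g\in G$ and $e\in E^1$; taking $g=\psi(e)$ shows that every continuous map $\psi\:E^1\to G$ automatically satisfies \eqref{cochain}. Hence \eqref{cochain} imposes no restriction, and a coboundary for the action $G\act E$ is simply a cocycle of the form $(g,e)\mapsto\psi(ge)\psi(e)\inv$ for an arbitrary continuous $\psi\:E^1\to G$.

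The equivalence (1)$\iff$(2) then follows by unwinding definitions. The cocycle $(g,e)\mapsto g$ is a coboundary exactly when there is a continuous $\psi\:E^1\to G$ with $g=\psi(ge)\psi(e)\inv$ for all $g\in G$, $e\in E^1$; rearranging gives $\psi(ge)=g\psi(e)$, which is condition (2). Conversely, any $\psi$ satisfying (2) yields $\psi(ge)\psi(e)\inv=g$, exhibiting $(g,e)\mapsto g$ as a coboundary.

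The step I expect to be the crux—though it is brief—is the observation that \eqref{cochain} is automatic under the fixes-sources hypothesis, since this is exactly what collapses the graph problem onto the purely space-theoretic \lemref{g cbdy}. The genuinely geometric content, namely the construction of the equivariant homeomorphism in (2)$\then$(3) via $\Omega=\psi\inv(\{1\})$ and $\theta(e)=\bigl(\psi(e),\psi(e)\inv e\bigr)$ with inverse $(t,\omega)\mapsto t\omega$, carries over from \lemref{g cbdy} without change.
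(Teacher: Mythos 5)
Your proof is correct and follows essentially the same route as the paper: both arguments observe that the fixes-sources hypothesis makes the cochain condition \eqref{cochain} automatic, so that being a coboundary for the graph action coincides with being a coboundary for the space action $G\act E^1$, and then invoke \lemref{g cbdy}. Your write-up just spells out the reduction in slightly more detail than the paper does.
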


\begin{proof}
Since the action fixes sources, we have
$gs(e)=s(e)$ for all $g\in G$ and $e\in E^1$,
and consequently
it is easy to see that $(g,e)\mapsto g$ is a coboundary for the action of $G$ on the topological graph $E$
if and only if it is a coboundary for the action of $G$ on the space $E^1$,
so the result follows immediately from \propref{g cbdy}.
\end{proof}

\begin{rem}
Suppose that the action
$G\curvearrowright E$ 
fixes sources
and that the map $(g,e)\mapsto g$ is a coboundary.
In view of Proposition \ref{g cby}, we may assume that $E^1=G\times\Omega$ and $g(h,x)=(gh,x)$ for all $g,h\in G$ and $x\in\Omega$.
It is interesting to examine 
the range and source maps of $E$.
Define continuous maps $\sigma,\rho\:\Omega\to E^0$ by
\begin{align*}
\sigma(x)&=s(1,x)
\\
\rho(x)&=r(1,x).
\end{align*}
Then for all $(g,x)\in G\times \Omega$ we have
\[
s(g,x)
=gs(1,x)
=s(1,x)
=\sigma(x).
\]
On the other hand, for the range map we have
\[
r(g,x)=gr(1,x)=g\rho(x).
\]
It is tempting to conjecture that much more can be said about this situation. 
As a kind of converse, let  $\rho, \sigma\: \Omega\to E^0$ be continuous maps between some locally compact Hausdorff spaces $\Omega$ and $E^0$. Assume that $\sigma$ is a local homeomorphism and that  $G$ is a discrete group acting by homeomorphisms on $E^0$ in such a way that  $$g\sigma(x) = \sigma(x)$$ for all $g\in G, x \in \Omega$. 
 Set $E^1:=G\times \Omega$ and define $r, s \: E^1\to E^0$ by $$r(h,x) = h \rho(x), \quad s(h,x) = \sigma(x)$$
for all $(h,x) \in E^1$. Then one checks readily that $E=(E^1, E^0, r, s)$ is a topological graph. Moreover, letting $G$ act on $E^1$ by $g(h,x) = (gh,x)$ for all $g\in G$ and $(h,x)\in E^1$, we obtain an action of $G$ on $E$ that is easily seen to satisfy $gs(e) = s(e) $ for all $e\in E^1$.
\end{rem}

\begin{q}
If the action of $G$ on $E$ fixes sources,
what can be said about the correspondence $\yphi$ for the cocycle $\varphi(g,e)=1$?
In the case where $E$ is finite with no sources,
Exel and Pardo \cite[Example~3.6]{EP} show that $\OO_{G,E}\simeq C^*(E)$. But we 
would like
to understand this (admittedly rather trivial) situation better.
We discuss a special case toward the end of \exref{strings}.
\end{q}

 \section{Generators and relations} \label{gen-rel}

Throughout this section, $E=(E^0,E^1,r,s)$ will be a directed graph,
$G$ will be a discrete group acting on $E$,
and $\varphi\:G\times E^1\to G$ will be a cocycle for this action.
We will describe the Toeplitz algebra $\TT_{\yphi}$ in terms of generators and relations, and give a similar description of the Cuntz-Pimsner algebra $\OO_{\yphi}$ when $E$ is assumed to be row-finite. In the case where $E$ is finite and sourceless, we thereby recover Exel and Pardo's initial definition of $\OO_{G,E}$ in \cite[Section~3]{EP}.

We use the notation introduced in Section 3 and refer the reader to \cite{Kat04} for undefined terminology and notation on $C^*$-correspondences. Let $(t_{\yphi},t_B)$ denote the universal Toeplitz representation of $(\yphi,B)$ in $\ty$,
$(k_{\yphi},k_B)$ 
the universal Cuntz-Pimsner covariant representation of $(\yphi,B)$ in $\oy$,
and $(i_A,i_G)$ 
the universal covariant homomorphism of $(A,G)$ in $M(B)$.
(Of course, since $G$ is discrete we have $i_A\:A\to B$.)

We work with the crossed product $B=A\rtimes_\alpha G$ and the $B$-cor\-res\-pon\-dence $\yphi$ in terms of the generators:
\begin{itemize}
\item $\Chi_{e,g}$ denotes the element of $C_c(E^1\times G)\subset \yphi$ given by the characteristic function of $\{(e,g)\}$.
\item $\delta_{v,g}$ denotes the element of $C_c(E^0\times G)\subset B$ given by the characteristic function of $\{(v,g)\}$.
\item Similarly for $\Chi_e\in C_c(E^1)\subset X$ and $\delta_v\in C_c(E^0)\subset A$.
\end{itemize}
Thus
\begin{itemize}
\item $C_c(E^1\times G)=\spn\{\Chi_{e,g}:e\in E^1,g\in G\}$.
\item $C_c(E^0\times G)=\spn\{\delta_{v,g}:v\in E^0,g\in G\}$.
\item 
$A$ is the $c_0$-direct sum of the 1-dimensional ideals generated by the projections $\delta_v$ for $v\in E^0$.
\end{itemize}

\begin{defn}
Let $D$ be a $C^*$-algebra.
A \emph{representation of $(E,G,\varphi)$ in $D$}
is a family $\{P_v,S_e,U_g:v\in E^0,e\in E^1,g\in G\}$
such that:
\begin{enumerate}
\eplist
\item \label{ep 1}
$\{P_v,S_e:v\in E^0,e\in E^1\}$ is a Toeplitz $E$-family in $D$,

\item \label{ep 2}
the map $U:g \mapsto U_g$ is a unitary representation of $G$ in $M(D)$,

\item \label{ep 3}
for all $g\in G$, $v\in E^0$, and $e\in E^1$ we have
\[
U_gP_v=P_{gv}U_g\midtext{and}U_gS_e=S_{ge}U_{\varphi(g,e)},
\]
and

\item \label{ep 4}
$D$ is generated as a $C^*$-algebra by
\[
\{P_vU_g:v\in E^0,g\in G\}\cup \{S_eU_g:e\in E^1,g\in G\}.
\]
\end{enumerate}
We frequently shorten the notation for the family to $\{P_v,S_e,U_g\}$.
If the above condition \ep{ep 1} is replaced by
\begin{enumerate}
\epprimelist
\item \label{epprime 1}
$\{P_v,S_e:v\in E^0,e\in E^1\}$ is a Cuntz-Krieger $E$-family in $D$,
\end{enumerate}
then we say $\{P_v,S_e,U_g\}$ is a \emph{CK-representation of $(E,G,\varphi)$ in $D$}.
\end{defn}

Since the left $B$-module $\yphi$ is nondegenerate,
the canonical homomorphisms
$t_B\:B\to \ty$ and $k_B\:B\to \oy$
are nondegenerate;
we denote by $\bar{t_B}$ and $\bar{k_B}$ their extensions to the multiplier algebra $M(B)$.
We define a representation
$\{p_v,s_e,u_g\}$ of $(E,G,\varphi)$ in $\ty$ by
\begin{align*}
p_v&=t_B(i_A(\delta_v))
\\
s_e&=t_{\yphi}(\Chi_{e,1})
\\
u_g&=\bar{t_B}(i_G(g)),
\end{align*}
and a CK-representation $\{p'_v,s'_e,u'_g\}$ in $\oy$ by
\begin{align*}
p'_v&=k_B(i_A(\delta_v))
\\
s'_e&=k_{\yphi}(\Chi_{e,1})
\\
u'_g&=\bar{k_B}(i_G(g)).
\end{align*}

Note that
\begin{align*}
i_A(\delta_v)&=\delta_{v,1}
\\
i_G(g)&=\sum_{v\in E^0}\delta_{v,g},
\end{align*}
where $1$ denotes the identity element of $G$
and
the sum converges in the strict topology of $M(B)$.
Also, the technology of discrete crossed products is set up so that
\[
i_A(\delta_v)i_G(g)=\delta_{v,g},
\]
and it follows that
\[
\delta_{v',g}\delta_{v,h}
=\begin{cases}
\delta_{gv,gh}\case v'=gv\\
0\ifnot.
\end{cases}
\]

We have
\begin{align*}
i_A(\delta_v)\cdot \Chi_{e,h}&=
\begin{cases}
\Chi_{e,h}\case v=r(e)\\
0\ifnot
\end{cases}
\\
\Chi_{e,h}\cdot i_A(\delta_v)&=
\begin{cases}
\Chi_{e,h}\case s(e)=hv\\
0\ifnot
\end{cases}
\\
i_G(g)\cdot \Chi_{e,h}&=\Chi_{ge,\varphi(g,e)h}
\\
\Chi_{e,h}\cdot i_G(g)&=\Chi_{e,hg}.
\end{align*}

Consequently
\begin{align*}
\delta_{v,g}\cdot \Chi_{e,h}&=
\begin{cases}
\Chi_{ge,\varphi(g,e)h}\case v=r(ge)\\
0\ifnot
\end{cases}
\\
\Chi_{e,h}\cdot \delta_{v,g}&=
\begin{cases}
\Chi_{e,hg}\case s(e)=hv\\
0\ifnot.
\end{cases}
\end{align*}

The inner product on basis elements satisfies
\[
\<\Chi_{e,1},\Chi_{e',1}\>=
\begin{cases}
\delta_{s(e),1}\case e=e'\\
0\ifnot,
\end{cases}
\]
and so
\begin{align*}
\<\Chi_{e,g},\Chi_{e,h}\>
&=\bigl\<\Chi_{e,1}\cdot i_G(g),\Chi_{e,1}\cdot i_G(h)\bigr\>
\\&=i_G(g\inv)\<\Chi_{e,1},\Chi_{e,1}\>i_G(h)
\\&=i_G(g\inv)\delta_{s(e),1}i_G(h)
\\&=\delta_{s(g\inv e),g\inv h},
\end{align*}
while $\<\Chi_{e,g},\Chi_{e',h}\>=0$ if $e\ne e'$.

Also,
\[
\sum_{v\in E^0}\delta_v=1
\]
in $M(A)$, where the series converges strictly,
and similarly
\[
\sum_{v\in E^0}\delta_{v,1}=1
\]
in $M(B)$.
Consequently (as has been mentioned in the literature, probably many times),
\[
\sum_{v\in E^0}p_v=1
\]
strictly in $M(\TT_{\yphi})$, and similarly for the projections $p_v'$ in $M(\OO_{\yphi})$.

The following shows that $\ty$ is the universal $C^*$-algebra for representations of $(E,G,\varphi)$,
which gives a presentation of $\ty$ in terms of generators and relations.

\begin{thm}\label{universal T}
Let $\{P_v,S_e,U_g\}$ be a representation of $(E,G,\varphi)$ in a $C^*$-algebra $D$.
Then there is a unique surjective homomorphism 
$\Phi$ from $\ty$ onto $D$ 
such that
\[
\Phi(p_v)=P_v,\quad
\Phi(s_e)=S_e, \midtext{and}
\overline{\Phi}(u_g)=U_g
\]
for all $v\in E^0$, $e\in E^1$, and $g\in G$.
\end{thm}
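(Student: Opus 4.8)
The plan is to manufacture from the family $\{P_v,S_e,U_g\}$ a Toeplitz representation $(t,\pi)$ of the correspondence $(\yphi,B)$ in $D$, and then invoke the universal property of the Toeplitz algebra $\ty$ (see \cite{Kat04}) to obtain $\Phi$. First I would build the homomorphism $\pi\:B\to D$. Setting $\rho(\delta_v)=P_v$ defines a $*$-homomorphism $\rho\:A\to M(D)$ because the $P_v$ are mutually orthogonal projections by \ep{ep 1}, and the first relation in \ep{ep 3}, namely $U_gP_v=P_{gv}U_g$, says exactly that $(\rho,U)$ is a covariant representation of $(A,G,\alpha)$ in $M(D)$. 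The universal property of the crossed product $B=A\rtimes_\alpha G$ then yields $\pi=\rho\times U\:B\to M(D)$, and since $\pi(\delta_{v,g})=P_vU_g\in D$ on the dense spanning set, in fact $\pi(B)\subseteq D$.

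Next I would define $t$ on the dense subspace $C_c(E^1\times G)=\spn\{\Chi_{e,g}\}$ by $t(\Chi_{e,g})=S_eU_g$ and extend linearly. To see that $t$ extends to $\yphi$, the crux is the inner-product identity $t(\xi)^*t(\eta)=\pi(\<\xi,\eta\>_B)$, which I would check on the spanning elements. Using the Toeplitz relation $S_e^*S_{e'}=P_{s(e)}$ if $e=e'$ and $0$ otherwise (from \ep{ep 1}), the covariance $U_g^*P_{s(e)}=P_{g\inv s(e)}U_g^*$, and the fact that $G$ acts by graph automorphisms so that $g\inv s(e)=s(g\inv e)$, one computes for $e=e'$ that $(S_eU_g)^*(S_eU_h)=P_{s(g\inv e)}U_{g\inv h}$, which is exactly $\pi$ applied to the inner product $\<\Chi_{e,g},\Chi_{e,h}\>=\delta_{s(g\inv e),g\inv h}$ recorded above; both sides vanish when $e\ne e'$. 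Sesquilinearity gives the identity on all of $\spn\{\Chi_{e,g}\}$, whence $\|t(\xi)\|^2=\|\pi(\<\xi,\xi\>_B)\|\le\|\xi\|^2$, so $t$ is bounded and extends to $\yphi$ with the inner-product identity preserved.

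The remaining axiom is left-module compatibility $\pi(b)t(\xi)=t(b\cdot\xi)$, which I would again verify on generators. For $\pi(\delta_{v,g})t(\Chi_{e,h})=P_vU_gS_eU_h$, the second relation in \ep{ep 3}, $U_gS_e=S_{ge}U_{\varphi(g,e)}$, together with the fact that $P_vS_{ge}=S_{ge}$ if $v=r(ge)$ and $0$ otherwise, turns the left-hand side into $S_{ge}U_{\varphi(g,e)h}$ (when $v=r(ge)$, else $0$), which is precisely $t(\delta_{v,g}\cdot\Chi_{e,h})$ by the formula $\delta_{v,g}\cdot\Chi_{e,h}=\Chi_{ge,\varphi(g,e)h}$ computed above. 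Thus $(t,\pi)$ is a Toeplitz representation of $(\yphi,B)$ in $D$, and the universal property of $\ty$ supplies a homomorphism $\Phi\:\ty\to D$ with $\Phi\circ t_B=\pi$ and $\Phi\circ t_{\yphi}=t$; evaluating on generators gives $\Phi(p_v)=\pi(\delta_{v,1})=P_v$, $\Phi(s_e)=t(\Chi_{e,1})=S_e$, and $\bar\Phi(u_g)=\bar\pi(i_G(g))=U_g$.

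Finally, surjectivity is immediate from \ep{ep 4}, since $\Phi(t_B(\delta_{v,g}))=P_vU_g$ and $\Phi(t_{\yphi}(\Chi_{e,g}))=S_eU_g$ exhaust a generating set of $D$; and uniqueness follows because these same elements $p_vu_g=t_B(\delta_{v,g})$ and $s_eu_g=t_{\yphi}(\Chi_{e,g})$ generate $\ty$, so $\Phi$ is forced on a generating set. I expect the main obstacle to be the inner-product (isometry) verification: it is the step where the graph-automorphism identity $s(ge)=gs(e)$ and the covariance of $U$ must be combined correctly, and it is exactly what guarantees that $t$ is well defined on all of $\yphi$ rather than merely on the dense subspace $C_c(E^1\times G)$.
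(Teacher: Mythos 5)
Your proposal is correct and follows essentially the same route as the paper: build the covariant pair $(\wilde P,U)$ to get the homomorphism of $B$, define the linear map on the spanning elements $\Chi_{e,g}\mapsto S_eU_g$, verify the inner-product identity and left-module covariance on generators, and invoke the universal property of $\ty$. (In fact your inner-product computation $(S_eU_g)^*(S_eU_h)=P_{s(g\inv e)}U_{g\inv h}=\pi(\delta_{s(g\inv e),g\inv h})$ is stated more carefully than the corresponding lines in the paper's proof.)
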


\begin{proof}
We will construct a Toeplitz representation $(\psi,\zeta)$ of the correspondence $(\yphi,B)$ in $D$,
and we work primarily with the generators.
First of all, the family $\{P_v\}$ of orthogonal projections uniquely determines a homomorphism $\wilde P\:A\to D$.
Then the relation \ep{ep 3} immediately implies that the pair $(\wilde P,U)$ is a covariant homomorphism of the system $(A,G,\alpha)$ in $D$, and the integrated form is a homomorphism $\zeta\:B\to D$,
given on generators by
\[
\zeta(\delta_{v,g})=P_vU_g\righttext{for}v\in E^0,g\in G.
\]
Next, we define a linear map $\psi\:C_c(E^1\times G)\to D$ as the unique linear extension of the map given on generators by
\[
\psi(\Chi_{e,g})=S_eU_g\righttext{for}e\in E^1,g\in G.
\]
The computation
\begin{align*}
\psi(\Chi_{e,g})^*\psi(\Chi_{e',h})
&=(S_eU_g)^*(S_{e'}U_h)
\\&=U_g^*S_e^*S_{e'}U_h,
\intertext{which is 0 unless $e=e'$, in which case we can continue as}
&=U_{g\inv}P_{s(e)}U_h
\\&=P_{gs(e)}U_{g\inv h}
\\&=\delta_{gs(e),g\inv h}
\\&=\zeta\bigl(\<\Chi_{e,g},\Chi_{e',h}\>\bigr)
\righttext{(which is also 0 if $e\ne e'$)}
\end{align*}
implies that $\psi$ is bounded, and hence extends uniquely to a bounded linear map $\psi\:\yphi\to D$.
Then combining the above with the computation
\begin{align*}
\zeta(\delta_{v,g})\psi(\Chi_{e,h})
&=P_vU_gS_eU_h
\\&=P_vS_{ge}U_{\varphi(g,e)h},
\intertext{which is 0 unless $v=r(ge)$, in which case we can continue as}
&=S_{ge}U_{\varphi(g,e)h}
\\&=\psi(\Chi_{ge,\varphi(g,e)h})
\\&=\psi\bigl(\delta_{v,g}\cdot \Chi_{e,h}\bigr)
\righttext{(which is also 0 if $v\ne r(ge)$)}
\end{align*}
shows that $(\psi,\zeta)$ is a Toeplitz representation of $(Y^\varphi,B)$ in $D$.

The
associated homomorphism $\Phi=\psi\times_\TT \zeta$ from $\TT_{\yphi}$ to $D$ is surjective, by the properties of representations. Moreover,
by construction
this homomorphism is the unique one satisfying
\[
\Phi(p_v)=P_v,\quad
\Phi(s_e)=S_e,\midtext{and}
\overline{\Phi}(u_g)=U_g
\]
for all $v\in E^0$, $e\in E^1$, and $g\in G$.
\end{proof}

In the following lemma we will gather some information about the Katsura ideal of the $B$-correspondence $\yphi$, under a mild assumption on $E$.

\begin{lem}\label{JY}
Assume that $E$ is row-finite, i.e., $|r\inv(v)| < \infty $ for all $v\in E^0$. 
Then the Katsura ideal $J_X$ for the $A$-correspondence $X$ is $G$-invariant,
the image of the
left-module map $\phi\:B\to\LL(\yphi)$
is contained in $\KK(\yphi)$,
and
\begin{align*}
J_{\yphi}
&\subset J_X\rtimes_\alpha G
=\clspn\{\delta_{v,g}:(v,g)\in E^0\times G,0<|r\inv(v)|\}.
\end{align*}
\end{lem}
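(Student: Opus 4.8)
The plan is to treat the three assertions in turn, reducing everything to the explicit description $J_X=\clspn\{\delta_v:0<|r\inv(v)|\}$ of the Katsura ideal of the graph correspondence of a row-finite directed graph. This is the standard Katsura computation: $\ker\phi_X=\clspn\{\delta_v:r\inv(v)=\emptyset\}$, while $\phi_X(\delta_v)=\sum_{r(e)=v}\Theta_{\Chi_e,\Chi_e}$ (with $\Theta_{x,y}z=x\<y,z\>_A$) lies in $\KK(X)$ exactly when $r\inv(v)$ is finite, so under the standing row-finite hypothesis the regular vertices are precisely the non-sources. For the $G$-invariance I would note that since $G$ acts by graph automorphisms we have $\alpha_g(\delta_v)=\delta_{gv}$ and $r(ge)=gr(e)$, so $e\mapsto ge$ is a bijection of $r\inv(v)$ onto $r\inv(gv)$; hence $|r\inv(gv)|=|r\inv(v)|$, the set $\{v:0<|r\inv(v)|\}$ is $G$-invariant, and $\alpha_g$ carries the spanning set of $J_X$ onto itself. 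The displayed equation $J_X\rtimes_\alpha G=\clspn\{\delta_{v,g}:0<|r\inv(v)|\}$ is then the standard identification of the crossed product of the $G$-invariant ideal $J_X$ as the ideal of $B$ spanned by the $\delta_{v,g}=i_A(\delta_v)i_G(g)$ with $0<|r\inv(v)|$.

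For the second assertion I would show directly that $\pi(A)\subseteq\KK(\yphi)$ and then exploit that $\KK(\yphi)$ is an ideal in $\LL(\yphi)$. Using the two-variable inner-product and module formulas already recorded, a short computation gives $\pi(\delta_v)=\sum_{e\in r\inv(v)}\Theta_{\Chi_{e,1},\Chi_{e,1}}$, where now $\Theta_{\xi,\eta}\zeta=\xi\cdot\<\eta,\zeta\>_B$: indeed one checks that $\Theta_{\Chi_{e,1},\Chi_{e,1}}$ sends $\zeta\in C_c(E^1\times G)$ to the function $(e',g)\mapsto\zeta(e,g)$ if $e'=e$ and $0$ otherwise, which summed over $r\inv(v)$ reproduces $\pi(\delta_v)$ as given by \eqref{left A two}. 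Row-finiteness makes this a finite sum, so $\pi(\delta_v)\in\KK(\yphi)$; since the $\delta_v$ generate $A$ and $\pi$ is a homomorphism, $\pi(A)\subseteq\KK(\yphi)$. As $B=\clspn\{i_A(\delta_v)i_G(g)\}$ and $\phi(i_A(\delta_v)i_G(g))=\pi(\delta_v)V_g$ with $V_g\in\LL(\yphi)$, the ideal property yields $\phi(B)\subseteq\KK(\yphi)$.

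For the final inclusion I would use the second assertion to simplify the Katsura ideal: since $\phi(B)\subseteq\KK(\yphi)$ we have $\phi\inv(\KK(\yphi))=B$, whence $J_{\yphi}=(\ker\phi)\ann$. Now decompose $A=C_0(E^0)=J_X\oplus C_0(E^0_{\mathrm{sce}})$ along the clopen partition of $E^0$ into non-sources (the regular vertices, by row-finiteness) and sources $E^0_{\mathrm{sce}}=\{v:r\inv(v)=\emptyset\}$; both summands are $G$-invariant by the argument of the first paragraph, so $B=(J_X\rtimes_\alpha G)\oplus(C_0(E^0_{\mathrm{sce}})\rtimes_\alpha G)$ as a direct sum of ideals. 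Since $\pi(\delta_v)=0$ when $v$ is a source, $\phi$ vanishes on $C_0(E^0_{\mathrm{sce}})\rtimes_\alpha G$, i.e.\ $C_0(E^0_{\mathrm{sce}})\rtimes_\alpha G\subseteq\ker\phi$; taking annihilators, which reverse inclusions, and using that in such a direct sum the annihilator of one summand is the other, I get $J_{\yphi}=(\ker\phi)\ann\subseteq(C_0(E^0_{\mathrm{sce}})\rtimes_\alpha G)\ann=J_X\rtimes_\alpha G$.

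The main obstacle is the rank-one identity $\pi(\delta_v)=\sum_{e\in r\inv(v)}\Theta_{\Chi_{e,1},\Chi_{e,1}}$ underlying the second assertion: this is where row-finiteness is genuinely used, to guarantee a finite sum and hence a compact operator, and it demands care with the two-variable conventions for the inner product and the left and right module actions on $\yphi$. The remaining steps are structural, following from the $G$-invariance of the $r$-fibre cardinalities and the clopen splitting of $E^0$ into sources and regular vertices.
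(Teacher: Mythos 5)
Your proposal is correct and follows essentially the same route as the paper: identify $J_X=c_0(E^0_{\rt{rg}})$ and its $G$-invariance, split $B$ as the direct sum of $J_X\rtimes_\alpha G$ and the crossed product of the sources (on which $\phi$ vanishes), take annihilators, and verify $\phi(\delta_{v,1})=\sum_{r(e)=v}\theta_{\Chi_{e,1},\Chi_{e,1}}$ to get compactness. The only cosmetic difference is that you invoke the ideal property of $\KK(\yphi)$ to handle $\phi(\delta_{v,g})=\phi(\delta_{v,1})V_g$, whereas the paper rewrites it explicitly as $\sum_{r(e)=v}\theta_{\Chi_{e,1},\Chi_{g\inv e,\varphi(g\inv,e)}}$ (a formula it reuses later in the proof of Corollary~\ref{universal O}).
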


\begin{proof}
Let
\begin{align*}
E^0_{\rg}&=r(E^1)
\\
E^0_{\so}&=E^0\minus E^0_{\rg}.
\end{align*}
Thus $E^0_{\so}$ is the set of sources and $E^0_{\rg}$ is the set of regular vertices.
Also, $E^0$ is the disjoint union of these two $G$-invariant subsets.
Put $J_0=c_0(E^0_{\so})$.
It is well-known that
\begin{align*}
J_X&=c_0(E^0_{\rg})
\\
J_0&=\ker\pi.
\end{align*}
Moreover, we have a direct-sum decomposition $A=J_X\oplus J_0$ into complementary $G$-invariant ideals.
The crossed product is thus a direct sum
\[
B=(J_X\rtimes_\alpha G)\oplus (J_0\rtimes_\alpha G)
\]
of complementary ideals.
Since the left-module map $\phi\:B\to\LL(\yphi)$ coincides with $\pi \times V$,
we get
\[
J_0\rtimes_\alpha G\subset \ker\phi.
\]
Thus
\[
(\ker\phi)\ann\subset (J_0\rtimes_\alpha G)\ann=J_X\rtimes_\alpha G.
\]
As $J_{\yphi}= \phi^{-1}(\KK(\yphi)) \cap (\ker\phi)\ann$, we can finish by showing that $\phi(B)\subset \KK(\yphi)$, and by the above it suffices to show that if $v\in E^0_{\rg}$ and $g\in G$ then
\begin{equation}\label{finite rank}
\phi(\delta_{v,g})
=\sum_{r(e)=v}\theta_{\Chi_{e,1},\Chi_{g\inv e,\varphi(g\inv,e)}},
\end{equation}
which is finite rank.
Since $C_c(E^1\times G)$ is dense in $\yphi$, it suffices to check the equality of the above two operators on a basis vector $\Chi_{e',h}$.
Recall that 
$\delta_{v,g}\cdot \Chi_{e',h}=\Chi_{ge,\varphi(g,e')h}$
if $v=r(ge')$ and 0 otherwise.

We first show that
\begin{equation}\label{v,1}
\phi(\delta_{v,1})=\sum_{r(e)=v}\theta_{\Chi_{e,1},\Chi_{e,1}}.
\end{equation}
For any $e\in r\inv(v)$
we have
\[
\theta_{\Chi_{e,1},\Chi_{e,1}}\Chi_{e',h}
=\Chi_{e,1}\cdot \<\Chi_{e,1},\Chi_{e',h}\>,
\]
which is 0 if $e\ne e'$.
Thus if $r(e')\ne v$ we have
\[
\sum_{r(e)=v}\theta_{\Chi_{e,1},\Chi_{e,1}}\Chi_{e',h}
=0=\phi(\delta_{v,1})\Chi_{e',h}.
\]
So now suppose that $r(e')=v$.
Then
\begin{align*}
\sum_{r(e)=v}\theta_{\Chi_{e,1},\Chi_{e,1}}\Chi_{e',h}
&=\theta_{\Chi_{e',1},\Chi_{e',1}}\Chi_{e',h}
\\&=\Chi_{e',1}\cdot \<\Chi_{e',1},\Chi_{e,h}\>
\\&=\Chi_{e',1}\cdot \delta_{s(e'),h}
\\&=\Chi_{e',h}=\phi(\delta_{v,1})\Chi_{e',h},
\end{align*}
verifying \eqref{v,1}.

Now we can prove \eqref{finite rank}:
\begin{align*}
\phi(\delta_{v,g})
&=\phi(\delta_{v,1}i_G(g))
=\phi(\delta_{v,1})V_g
\\&=\sum_{r(e)=v}\theta_{\Chi_{e,1},\Chi_{e,1}}V_g
\\&=\sum_{r(e)=v}\theta_{\Chi_{e,1},V_g^*\Chi_{e,1}}
\\&=\sum_{r(e)=v}\theta_{\Chi_{e,1},V_{g\inv}\Chi_{e,1}}
\\&=\sum_{r(e)=v}\theta_{\Chi_{e,1},\Chi_{g\inv e,\varphi(g\inv,e)}}.
\qedhere
\end{align*}
\end{proof}

We can now deduce that,
keeping the row-finiteness
assumption on the graph $E$,
the Cuntz-Pimsner algebra
$\oy$ is the universal $C^*$-algebra for CK-representations of $(E,G,\varphi)$,
which gives a presentation of $\oy$ in terms of generators and relations.

\begin{cor}\label{universal O}
Let $\{P_v,S_e,U_g\}$ be a CK-representation of $(E,G,\varphi)$ in a $C^*$-algebra $D$.
If $E$ is row-finite,
then
there is a unique surjective homomorphism $\Phi\:\oy\to D$ such that
\[
\Phi(p'_v)=P_v,\quad
\Phi(s'_e)=S_e, \midtext{and}
\overline{\Phi}(u'_g)=U_g
\]
for all $v\in E^0$, $e\in E^1$, and $g\in G$.
\end{cor}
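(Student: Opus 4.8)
The plan is to derive this from the universal property of the Toeplitz algebra established in \thmref{universal T}, by showing that the homomorphism it produces descends to the Cuntz-Pimsner quotient. Since a Cuntz-Krieger $E$-family is in particular a Toeplitz $E$-family, the given CK-representation $\{P_v,S_e,U_g\}$ is a representation of $(E,G,\varphi)$ in the sense of \ep{ep 1}--\ep{ep 4}; hence \thmref{universal T} furnishes a surjection $\Phi_0\:\ty\to D$, equivalently a Toeplitz representation $(\psi,\zeta)$ of $(\yphi,B)$ in $D$ with $\psi(\Chi_{e,g})=S_eU_g$ and $\zeta(\delta_{v,g})=P_vU_g$. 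Recalling that $\oy$ is the quotient of $\ty$ by the ideal generated by $\{t_B(b)-t_{\yphi}^{(1)}(\phi(b)):b\in J_{\yphi}\}$, it suffices to show that $(\psi,\zeta)$ is Cuntz-Pimsner covariant, i.e., that $\zeta(b)=\psi^{(1)}(\phi(b))$ for all $b\in J_{\yphi}$, where $\psi^{(1)}\:\KK(\yphi)\to D$ is the homomorphism determined by $\psi^{(1)}(\theta_{\xi,\eta})=\psi(\xi)\psi(\eta)^*$; then $\Phi_0$ factors through the quotient map and the induced $\Phi\:\oy\to D$ is the desired homomorphism.

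To verify covariance I would first reduce to generators using \lemref{JY}. That lemma gives $J_{\yphi}\subset J_X\rtimes_\alpha G=\clspn\{\delta_{v,g}:v\in E^0,\ 0<|r\inv(v)|<\infty\}$, so by linearity and continuity of $\zeta$, $\phi$, and $\psi^{(1)}$ it is enough to check $\zeta(\delta_{v,g})=\psi^{(1)}(\phi(\delta_{v,g}))$ for each regular vertex $v$ and each $g\in G$.

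The core is then a direct computation built on the explicit finite-rank formula \eqref{finite rank}. Applying $\psi^{(1)}$ and using $\psi(\Chi_{e,1})=S_e$ together with $\psi(\Chi_{g\inv e,\varphi(g\inv,e)})=S_{g\inv e}U_{\varphi(g\inv,e)}$ gives
\[
\psi^{(1)}(\phi(\delta_{v,g}))=\sum_{r(e)=v}S_eU_{\varphi(g\inv,e)\inv}S_{g\inv e}^*.
\]
Substituting $f=g\inv e$ (so that $r(e)=v$ becomes $r(f)=g\inv v$) and using the cocycle identity in the form $\varphi(g\inv,gf)\inv=\varphi(g,f)$ turns the summand into $S_{gf}U_{\varphi(g,f)}S_f^*$; by \ep{ep 3} this equals $U_gS_fS_f^*$. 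Since $v$, and hence $g\inv v$, is regular, the Cuntz-Krieger relation \epprime{epprime 1} gives $\sum_{r(f)=g\inv v}S_fS_f^*=P_{g\inv v}$, and a final application of \ep{ep 3} yields $U_gP_{g\inv v}=P_vU_g=\zeta(\delta_{v,g})$, as required.

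Finally, the induced homomorphism $\Phi\:\oy\to D$ automatically satisfies $\Phi(p'_v)=P_v$, $\Phi(s'_e)=S_e$, and $\overline{\Phi}(u'_g)=U_g$, since $p'_v,s'_e,u'_g$ are the images of $p_v,s_e,u_g$ under the quotient map and $\Phi_0$ has the corresponding property by \thmref{universal T}; surjectivity and uniqueness of $\Phi$ follow from the same properties of $\Phi_0$ together with surjectivity of the quotient map. I expect the main obstacle to be the covariance verification, but \lemref{JY}, which pins down $J_{\yphi}$ and supplies the formula \eqref{finite rank}, does most of the work; the remaining effort is the bookkeeping with the cocycle identity and the index substitution $f=g\inv e$.
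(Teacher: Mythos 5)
Your proposal is correct and follows essentially the same route as the paper: reduce Cuntz--Pimsner covariance to the generators $\delta_{v,g}$ via \lemref{JY} and the finite-rank formula \eqref{finite rank}, then compute, and finally descend the Toeplitz surjection to the quotient. The only cosmetic difference is that the paper rewrites $S_{g\inv e}U_{\varphi(g\inv,e)}$ directly as $U_{g\inv}S_e$ using \ep{ep 3} and takes adjoints, whereas you reindex the sum by $f=g\inv e$ and invoke the cocycle identity; the two computations are equivalent.
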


\begin{proof}
With the notation from the proof of \thmref{universal T}, we
must show that the Toeplitz representation $(\psi,\zeta)$ is Cuntz-Pimsner covariant.
That is, we must show that for all $b$ in the Katsura ideal $J_{\yphi}$ of $\yphi$ we have $\psi^{(1)}\circ \phi(b)=\zeta(b)$,
where $\phi\:B\to \LL(\yphi)$ is the left module homomorphism
and $\psi^{(1)}\:\KK(\yphi)\to D$ is the homomorphism associated to the Toeplitz representation $(\psi,\zeta)$.
By
\lemref{JY}, and by
linearity, density, and continuity,
it suffices to compute that for all $(v,g)\in E^0_{\rg}\times G$
\begin{align*}
\psi^{(1)}\circ \phi(\delta_{v,g})
&=\psi^{(1)}\left(\sum_{r(e)=v}\theta_{\Chi_{e,1},\Chi_{g\inv e,\varphi(g\inv,e)}}\right)
\\&=\sum_{r(e)=v}\psi(\Chi_{e,1})\psi(\Chi_{g\inv e,\varphi(g\inv,e)})^*
\\&=\sum_{r(e)=v}S_e\bigl(S_{g\inv e}U_{\varphi(g\inv,e)}\bigr)^*
\\&=\sum_{r(e)=v}S_e\bigl(U_{g\inv}S_e\bigr)^*
\\&=\sum_{r(e)=v}S_eS_e^*U_g
\\&=P_vU_g
\\&=\zeta(\delta_{v,g}).
\qedhere
\end{align*}
\end{proof}

If $E$ is finite, then $\OO_{\yphi}$ is unital, and \corref{universal O} shows that it has exactly the same universal properties as the Exel-Pardo algebra   $\OO_{G,E}$ (cf.\  \cite[Definition 3.2]{EP}).  
Hence, if  $E$ is finite with no sources, then  $\oy$ is isomorphic to  $\OO_{G,E}$. We will give another proof of this fact in \corref{alt EP alg}.

In the proof of \lemref{JY} we showed that
when $E$ is row-finite we have
$\phi(B)\subset \KK(\yphi)$, where $\phi\:B\to \LL(\yphi)$ is the left-module homomorphism.
In fact, 
assuming a bit more about $E$,
we can identify the Katsura ideal:

\begin{cor}\label{ideal}
If $E$ is row-finite and has no sources, then the Katsura ideal $J_{\yphi}$ of the $B$-correspondence $\yphi$ coincides with $B$.
\end{cor}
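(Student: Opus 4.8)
The plan is to combine the explicit description of the Katsura ideal with \lemref{JY}. By definition
\[
J_{\yphi}=\phi\inv(\KK(\yphi))\cap(\ker\phi)\ann,
\]
where $\phi\:B\to\LL(\yphi)$ is the left-module homomorphism. In the proof of \lemref{JY} we saw that row-finiteness forces $\phi(B)\subset\KK(\yphi)$, so $\phi\inv(\KK(\yphi))=B$ and hence $J_{\yphi}=(\ker\phi)\ann$. Since for a closed ideal $I\subset B$ we have $\overline{BI}=I$, the equality $I\ann=B$ holds precisely when $I=0$; so the corollary is equivalent to the assertion that $\phi$ is \emph{injective}, and that is what I would aim to establish.

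Next I would record what the no-sources hypothesis provides. With $E^0_{\so}=\emptyset$ we have $E^0=E^0_{\rg}$, so in the notation of the proof of \lemref{JY} the ideal $J_0=c_0(E^0_{\so})$ is zero; equivalently the representation $\pi\:A\to\LL(\yphi)$ of \eqref{left A two} is faithful, and $J_X=A$, whence $J_X\rtimes_\alpha G=B$. Thus \lemref{JY} already yields the inclusion $J_{\yphi}\subset B$, and the entire content of the corollary is to upgrade the faithfulness of $\pi$ to faithfulness of the integrated form $\phi=\pi\times V$.

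To prove $\ker\phi=0$ I would work directly on the dense subspace $\spn\{\Chi_{e,g}\}=C_c(E^1\times G)$. Taking $b\in B$ with $\phi(b)=0$, it suffices (by density) to treat $b\in C_c(E^0\times G)$ and show its coefficients vanish. Applying $\phi(b)$ to a generator and using $V_g\Chi_{e,k}=\Chi_{ge,\varphi(g,e)k}$ gives
\[
\phi(b)\Chi_{e_0,k}=\sum_{h\in G}b\bigl(r(he_0),h\bigr)\,\Chi_{he_0,\varphi(h,e_0)k},
\]
so $\phi(b)=0$ forces, for each edge in the orbit of $e_0$ and each attained value of $\varphi$, the corresponding sum of coefficients $b(r(he_0),h)$ to vanish. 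When $G$ acts freely on $E^1$ each basis vector on the right arises for a single $h$, and one reads off $b=0$ at once. The main obstacle is nontrivial edge isotropy: distinct $h$ can give the same $(he_0,\varphi(h,e_0)k)$, so a scalar reading only sees certain sums of the $b(v,h)$. To separate the surviving group directions and recover every coefficient I would pass from scalar pairings to the full $B$-valued inner product $\langle\,\cdot\,,\,\cdot\,\rangle_B$ on $\yphi$ (this is where it matters that $\yphi$ is a module over the \emph{full} crossed product and not a reduced one), and this isotropy analysis is the delicate step on which the argument hinges; once it is carried out we obtain $\ker\phi=0$, hence $J_{\yphi}=(\ker\phi)\ann=B$.
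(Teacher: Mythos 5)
Your reduction is exactly the paper's: row-finiteness gives $\phi(B)\subset\KK(\yphi)$ by \lemref{JY}, the no-sources hypothesis gives $J_X\rtimes_\alpha G=B$, and the corollary becomes the assertion that $\phi=\pi\times V$ is injective. The gap is in how you propose to prove injectivity. Checking that $\phi(b)=0$ forces $b=0$ for $b$ in the dense $*$-subalgebra $C_c(E^0\times G)$ does \emph{not} establish injectivity of $\phi$ on $B$: the kernel of $\phi$ is a closed ideal, and a nonzero closed ideal of $B$ can meet $C_c(E^0\times G)$ only in $\{0\}$. This is precisely the full-versus-reduced crossed product phenomenon you allude to, but it cuts against you rather than for you --- the regular representation of $C^*(G)$ for nonamenable $G$ is injective on $C_c(G)$ yet has nontrivial kernel on the completion. \remref{varphi trivial} is a warning sign here: even for the trivial cocycle, whether $J_{X\rtimes G}=J_X\rtimes G$ is open for general $G$ and known only under amenability or approximation hypotheses, so a bare-hands coefficient computation on $C_c(E^0\times G)$ cannot be the whole story. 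On top of this, the computation you do propose is left unfinished: the ``delicate isotropy analysis'' is exactly the point where you would have to produce an argument, and none is given.

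The paper's proof sidesteps all of this with one observation. The canonical CK-representation $\{p_v',s_e',u_g'\}$ in $\oy$ determines a Toeplitz representation $(\psi,\zeta)$ of $(\yphi,B)$ with $\zeta=k_B$, the canonical coefficient map into the Cuntz--Pimsner algebra, and $k_B$ is injective by Katsura's \cite[Proposition~4.11]{Kat04}. Since the proof of \corref{universal O} already established $\psi^{(1)}\circ\phi=\zeta$ on $B$, injectivity of $\zeta$ forces injectivity of $\phi$. I would recommend replacing your direct approach with this factorization argument; it is where the actual content of the corollary lives.
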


\begin{proof}
By the preceding, we only need to show that the left-module homomorphism $\phi\:B\to \KK(\yphi)$ is injective. 
Our new hypotheses imply that, in the notation of the proof of \lemref{JY},
$B=J_X\rtimes_\alpha G$.
Recall
the CK-representation 
\begin{align*}
p_v'&=k_B(i_A(\delta_v)),
&
s_e'&=k_{\yphi}(\Chi_{e,1}),
&
u_g'&=\bar{k_B}(i_G(g))
\end{align*}
of $(E,G,\varphi)$ in $\oy$,
and let $(\psi,\zeta)$ be the associated Toeplitz representation of the $B$-correspondence $\yphi$ in $\OO_{\yphi}$,
so that in particular
\begin{align*}
\zeta=\pi_A\times u',
\end{align*}
where $\pi_A\:A\to \OO_{\yphi}$ is determined by
\[
\pi_A(a)=\sum_{v\in E^0}a(v)p_v'\righttext{for}a\in C_c(E^0).
\]
Then clearly
$\zeta=k_B$,
the canonical homomorphism from $B$ to $\oy$.
Thus $\zeta$ is injective by \cite[Proposition~4.11]{Kat04}.
Since we have shown above that $\psi^{(1)}\circ \phi=\zeta$, it follows that $\phi$ is injective.
\end{proof}

\begin{rem} \label{varphi trivial}
We imposed the row-finite
hypothesis on the graph in \corref{universal O} because otherwise it would be problematic to get our hands on the Katsura ideal $J_{\yphi}$ of the correspondence $\yphi$.
Even when $\varphi$ is the cocycle $(g,e)\mapsto g$, so that $\yphi=X\rtimes_\gamma G$, the relationship between the two ideals $J_{X\rtimes_\gamma G}$ and $J_X\rtimes_\alpha G$ of $B=A\rtimes_\alpha G$ is murky.
There are partial results:
the two ideals coincide when
$G$ is amenable \cite[Proposition~2.7]{HaoNg},
or is discrete and has Exel's Approximation Property \cite[Theorem~5.5]{bkqr},
but it is unknown whether the two ideals coincide for arbitrary $G$.
However, when $G$ is discrete, $E$ is row-finite with no sources, and $\varphi$ is the cocycle $(g,e)\mapsto g$, Corollary \ref{ideal}  gives that $J_{X\rtimes G} = B = A\rtimes G = J_X\rtimes G$, and we can then conclude from \cite[Theorem~4.1]{bkqr} that $\OO_{X\rtimes G}\simeq \OO_X \rtimes G$, i.e., $\OO_{\yphi} \simeq C^*(E)\rtimes G$. In the case where $E$ is finite and sourceless, this was pointed out by Exel and Pardo in \cite[Example~3.5]{EP}. 
\end{rem}

\section{The Exel-Pardo correspondence}

When $G$ is discrete and the graph $E$ is finite, Exel and Pardo \cite[Section~10]{EP} define a correspondence, that they denote by $M$, over the crossed product $B=A\rtimes_\alpha G$. (Warning: they call this crossed product $A$, whereas we write $A$ for $C_0(E^0)$.)
Exel and Pardo also require $E$ to have no sources, but they remark in \cite[Section~2]{EP} that this assumption, as well as finiteness of $E$, are probably only  necessary in Section~3 of their paper, which it so happens does not concern us in our paper.

Throughout this section we assume that $G$ is a discrete group acting on a 
directed graph $E$,
and that $\varphi$ is a cocycle for this action.

Our construction of the $B$-correspondence $\yphi$ in \secref{withphi} is different from that of Exel and Pardo \cite[Section~10]{EP},
so it behooves us to compare them. 

\begin{thm}\label{M}
Let $M$ be the $B$-correspondence constructed in \cite[Section~10]{EP}. Then $\yphi\simeq M$ as $B$-correspondences.
\end{thm}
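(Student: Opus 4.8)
The plan is to exhibit an explicit $B$-linear unitary isomorphism between the two correspondences by working on the dense subspaces built from the generators. Both $\yphi$ and $M$ are completions of convolution-type spaces sitting over the same crossed product $B=A\rtimes_\alpha G$, so I would first unwind Exel and Pardo's definition of $M$ from \cite[Section~10]{EP} and recast it in terms of a spanning family of elementary tensors or characteristic-function generators, analogous to the $\Chi_{e,g}$ that span $C_c(E^1\times G)\subset\yphi$. The natural candidate for the map $\Psi\:\yphi\to M$ is the linear extension of the assignment sending each generator $\Chi_{e,g}$ to the corresponding elementary generator of $M$; because $G$ is discrete and $E$ is a directed graph, everything is genuinely algebraic on these dense subspaces, so no integration or approximation subtleties intervene at this stage.

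The verification then breaks into the standard three checks for an isomorphism of correspondences. First I would show $\Psi$ preserves the $B$-valued inner product on generators, using the inner-product formula
\[
\<\Chi_{e,g},\Chi_{e,h}\>=\delta_{s(g\inv e),g\inv h}
\]
recorded in \secref{gen-rel} together with whatever the matching formula is for $M$; this establishes that $\Psi$ is isometric and hence extends to the completions. Second, I would check right $B$-linearity by evaluating $\Psi(\Chi_{e,h}\cdot\delta_{v,g})$ against $(\Psi\Chi_{e,h})\cdot\delta_{v,g}$, using the right-module formulas
\[
\Chi_{e,h}\cdot i_G(g)=\Chi_{e,hg},\qquad
\Chi_{e,h}\cdot i_A(\delta_v)=\Chi_{e,h}\ \text{iff } s(e)=hv.
\]
Third, and most importantly, I would verify that $\Psi$ intertwines the left $B$-actions, where the cocycle $\varphi$ enters through the deformed left action; the governing formulas on the $\yphi$ side are
\[
i_G(g)\cdot\Chi_{e,h}=\Chi_{ge,\varphi(g,e)h},\qquad
i_A(\delta_v)\cdot\Chi_{e,h}=\Chi_{e,h}\ \text{iff } v=r(e).
\]
Matching these against the left action on $M$ is what certifies that the two constructions deform the left module structure in the same way.

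The main obstacle I anticipate is entirely in the left-module comparison: Exel and Pardo build $M$ with a different bookkeeping of how $G$ and $\varphi$ act, so the crux is to show that their deformation of the left multiplication agrees, on generators, with ours after the identification $\Psi$. Concretely, the heart of the proof is a single computation reconciling the two formulas for $i_G(g)\cdot\Chi_{e,h}=\Chi_{ge,\varphi(g,e)h}$, where the vertex condition \eqref{vertex condition} and the cocycle identity must be invoked to line up the source-matching conditions. Surjectivity of $\Psi$ is then immediate, since $\Psi$ is an isometry whose image contains a spanning set of $M$, so $\Psi$ is a unitary of Hilbert $B$-modules intertwining the left actions, i.e. an isomorphism of $B$-correspondences. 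I would present the inner-product and right-linearity checks briefly and devote the bulk of the argument to the left-module identity.
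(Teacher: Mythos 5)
Your proposal follows essentially the same route as the paper: define $\Psi$ on the generators $\Chi_{e,g}$ (the paper sends $\Chi_{e,g}$ to the tuple whose $e$-th coordinate is $i_A(\delta_{s(e)})i_G(g)$ and whose other coordinates vanish), verify preservation of inner products, right and left $B$-linearity on generators, and deduce surjectivity from the density of $\spn\{i_A(\delta_{s(e)})i_G(g)\}$ in each summand $B^e$ of $M$. You also correctly identify the crux as the left-module comparison for the $G$-part of the action, where the paper invokes exactly the vertex condition $\varphi(g,e)s(e)=gs(e)=s(ge)$ that you flag.
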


\begin{proof}
We review the construction of $M$,
but using slightly different notation and adapting it to our more general context.
It should be clear that we produce the same structure as in \cite{EP}.
For $v\in E^0$ let $\delta_v\in C_c(E^0)\subset A$ be the characteristic function of $\{v\}$.
For each $E\in E^1$ let
\[
B^e=i_A(\delta_{s(e)})B,
\]
which is a closed right ideal of $B$,
and hence a Hilbert $B$-module in the obvious way.
Then form a new Hilbert $B$-module as the direct sum
\[
M=\bigoplus_{e\in E^1}B^e.
\]
An element $m\in M$ is an $E^1$-tuple
\[
m=(m_e)_{e\in E^1},
\]
and the coordinates have the form
\[
m_e=i_A(\delta_{s(e)})b_e,\righttext{with}b_e\in B.
\]
The left $B$-module structure on $M$ is the integrated form of 
a covariant 
pair of
left module multiplications of $A$ and $G$,
defined on the generators by
\begin{align*}
(\delta_v\cdot m)_e&=\begin{cases}m_e\case v=r(e)\\0\ifnot\end{cases}
\\
(g\cdot m)_e&=i_A(\delta_{s(e)})i_G(\varphi(g,g\inv e))m_{g\inv e}.
\end{align*}

We will define an isomorphism $\Psi\:\yphi\to M$ of $B$-correspondences.
We begin by defining $\Psi$ on the dense subspace $C_c(E^1\times G)$, and by linear independence it suffices to define
\[
\bigl(\Psi\Chi_{e,g}\bigr)_{e'}=\begin{cases}
i_A(\delta_{s(e)})i_G(g)\case e'=e\\
0\ifnot.
\end{cases}
\]
The following computation implies that $\Psi$ preserves inner products on $C_c(E^1\times G)$:
for $e,f\in E^1$ and $g,h\in G$ we have
\begin{align*}
\<\Psi\Chi_{e,g},\Psi\Chi_{f,h}\>
&=\sum_{e'\in E^1}(\Psi\Chi_{e,g})_{e'}^*(\Psi\Chi_{f,h})_{e'},
\end{align*}
which is 0 unless $e=f=e'$, and when $e=f$ we have
\begin{align*}
\<\Psi\Chi_{e,g},\Psi\Chi_{e,h}\>
&=(\Psi\Chi_{e,g})_e^*(\Psi\Chi_{e,h})_e
\\&=\bigl(i_A(\delta_{s(e)})i_G(g)\bigr)^*\bigl(i_A(\delta_{s(e)})i_G(h)\bigr)
\\&=i_G(g\inv)i_A(\delta_{s(e)})i_G(h)
\\&=i_A(\delta_{g\inv s(e)}i_G(g\inv h)
\\&=\delta_{g\inv s(e),g\inv h}
\\&=\<\Chi_{e,g},\Chi_{e,h}\>.
\end{align*}
Thus $\Psi$ extends uniquely to an isometric linear map
from $\yphi$ to $M$,
which we continue to denote by $\Psi$.

As pointed out in \cite[Section~10]{EP},
\[
B^e=\clspn\{i_A(\delta_{s(e)})i_G(g):g\in G\},
\]
and it follows that $\Psi$ has dense range, and hence is surjective.

The following computations imply that $\Psi$ is right $B$-linear:
\begin{align*}
\bigl(\Psi(\Chi_{e,h}\cdot \delta_v)\bigr)_{e'}
&=\begin{cases}
(\Psi\Chi_{e,h})_{e'}\case s(e)=v\\
0\ifnot
\end{cases}
\\&=\begin{cases}
i_A(\delta_{s(e)})i_G(h)\case s(e)=v,e'=e\\
0\ifnot,
\end{cases}
\end{align*}
while
\begin{align*}
\bigl((\Psi\Chi_{e,h})\cdot \delta_v\bigr)_{e'}
&=\begin{cases}
(\Psi\Chi_{e,h})_{e'}\case s(e')=v\\
0\ifnot
\end{cases}
\\&=\begin{cases}
i_A(\delta_{s(e)})i_G(h)\case s(e')=v,e'=e\\
0\ifnot,
\end{cases}
\end{align*}
so $\Psi(\Chi_{e,h}\cdot \delta_v)=(\Psi\Chi_{e,h})\cdot \delta_v$,
and
\begin{align*}
\bigl(\Psi(\Chi_{e,h}\cdot g)\bigr)_{e'}
&=\bigl(\Psi\Chi_{e,hg}\bigr)_{e'}
\\&=\begin{cases}i_A(\delta_{s(e)})i_G(hg)\case e'=e\\0\ifnot,\end{cases}
\end{align*}
while
\begin{align*}
\bigl((\Psi\Chi_{e,h})\cdot g\bigr)_{e'}
&=(\Psi\Chi_{e,h})_{e'}\cdot g
\\&=\begin{cases}i_A(\delta_{s(e)})i_G(h)\cdot g\case e'=e\\0\ifnot\end{cases}
\\&=\begin{cases}i_A(\delta_{s(e)})i_G(hg)\case e'=e\\0\ifnot\end{cases}
\end{align*}
so $\Psi(\Chi_{e,h}\cdot g)=(\Psi\Chi_{e,h})\cdot g$.
This combined with the other properties of $\Psi$ makes it a unitary map from the Hilbert $B$-module $\yphi$ to the Hilbert $B$-module $M$
\cite[Theorem~3.5]{lance}.

The following computations imply that $\Psi$ is left $B$-linear:
\begin{align*}
\bigl(\Psi(\delta_v\cdot \Chi_{e,h})\bigr)_{e'}
&=\begin{cases}
\bigl(\Psi\Chi_{e,h}\bigr)_{e'}\case v=r(e)\\
0\ifnot
\end{cases}
\\&=\begin{cases}
i_A(\delta_{s(e)})i_G(h)\case v=r(e),e'=e\\
0\ifnot,
\end{cases}
\end{align*}
while
\begin{align*}
\bigl(\delta_v\cdot (\Psi\Chi_{e,h})\bigr)_{e'}
&=\begin{cases}
\bigl(\Psi\Chi_{e,h}\bigr)_{e'}\case v=r(e')\\
0\ifnot
\end{cases}
\\&=\begin{cases}
i_A(\delta_{s(e)})i_G(h)\case v=r(e'),e'=e\\
0\ifnot,
\end{cases}
\end{align*}
so $\Psi(\delta_v\cdot \Chi_{e,h})=\delta_v\cdot (\Psi\Chi_{e,h})$,
and
\begin{align*}
\bigl(\Psi(g\cdot \Chi_{e,h})\bigr)_{e'}
&=\bigl(\Psi\Chi_{ge,\varphi(g,e)h}\bigr)_{e'}
\\&=\begin{cases}
i_A(\delta_{s(ge)})i_G(\varphi(g,e)h)\case e'=ge\\
0\ifnot
\end{cases}
\end{align*}
while
\begin{align*}
&\bigl(g\cdot (\Psi\Chi_{e,h})\bigr)_{e'}
\\&\quad=\begin{cases}
i_A(\delta_{s(e')})i_G(\varphi(g,g\inv e'))
i_A(\delta_{s(e)})i_G(h)\case g\inv e'=e\\
0\ifnot,
\end{cases}
\intertext{since $(\Psi\Chi_{e,h})_{g\inv e'}
=i_A(\delta_{s(e)})i_G(h)$ if $g\inv e'=e$ and 0 if not,}
&=\begin{cases}
i_A(\delta_{s(ge)})i_A(\delta_{\varphi(g,e)s(e)})i_G(\varphi(g,e)h)\case e'=ge\\
0\ifnot
\end{cases}
\\&=\begin{cases}
i_A(\delta_{s(ge)})i_G(\varphi(g,e)h)\case e'=ge\\
0\ifnot,
\end{cases}
\end{align*}
since $\varphi(g,e)s(e)=gs(e)=s(ge)$,
so $\Psi(g\cdot \Chi_{e,h})=g\cdot (\Psi\Chi_{e,h})$.
Therefore $\Psi$ is an isomorphism of $B$-correspondences.
\end{proof}

\begin{cor}\label{alt EP alg}
Assume that $E$ is finite with no sources.
Then the Cuntz-Pimsner algebra $\oy$ is isomorphic to the Exel-Pardo algebra $\OO_{G,E}$.
\end{cor}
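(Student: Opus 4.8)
The plan is to deduce the statement from \thmref{M} together with the corresponding identification of Exel and Pardo. By \thmref{M} we have an isomorphism $\yphi\simeq M$ of $B$-correspondences, where $M$ is the Exel-Pardo correspondence of \cite[Section~10]{EP}. The first step is the general principle that an isomorphism of $C^*$-correspondences over the same coefficient algebra induces an isomorphism of the associated Cuntz-Pimsner algebras: the isomorphism $\Psi\:\yphi\to M$ of \thmref{M} carries Cuntz-Pimsner covariant representations of $\yphi$ bijectively to those of $M$ while preserving the underlying representation of $B$, and therefore descends to an isomorphism $\oy\simeq\OO_M$ of the universal objects. The second step is to invoke \cite[Section~10]{EP}, where Exel and Pardo realize their algebra $\OO_{G,E}$ as the Cuntz-Pimsner algebra $\OO_M$ of $M$. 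Combining the two steps yields $\oy\simeq\OO_M\simeq\OO_{G,E}$.

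The one point requiring care is that the various standard definitions of the Cuntz-Pimsner algebra must agree for the correspondence at hand, so that the $\OO_M$ appearing in \cite[Section~10]{EP} really is the same object as the $\oy$ built in this paper with the conventions of \cite{Kat04}. Here I would appeal to regularity: since $E$ is finite with no sources it is row-finite with no sources, so by \corref{ideal} the Katsura ideal $J_{\yphi}$ is all of $B$, while by \lemref{JY} (and the proof of \corref{ideal}) the left action $\phi\:B\to\LL(\yphi)$ is injective with image contained in $\KK(\yphi)$. Thus $\yphi$ is a regular $B$-correspondence, and for regular correspondences all the usual conventions for the Cuntz-Pimsner algebra coincide, so there is no ambiguity in identifying $\oy$ with $\OO_M$.

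I expect the only genuine difficulty to be this bookkeeping of conventions, rather than any new computation. In particular, the functoriality step relies on $\Psi$ respecting the full $B$-correspondence structure—not merely the Hilbert-module structure—but this is already secured in the proof of \thmref{M}, which verifies that $\Psi$ is simultaneously right $B$-linear, left $B$-linear, and unitary. Consequently the passage from the correspondence isomorphism to the isomorphism of Cuntz-Pimsner algebras is immediate, and the corollary follows.
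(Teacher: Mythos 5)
Your argument is exactly the paper's proof: \thmref{M} gives $\oy\simeq\OO_M$ via functoriality of the Cuntz-Pimsner construction under isomorphism of correspondences, and \cite[Theorem~10.15]{EP} identifies $\OO_M$ with $\OO_{G,E}$. Your additional remark that \corref{ideal} makes $\yphi$ regular, so the various Cuntz-Pimsner conventions agree, is a sensible precaution that the paper leaves implicit, but it does not change the route.
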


\begin{proof}
It follows immediately from \thmref{M} that the Cuntz-Pims\-ner algebras $\oy$ and $\OO_M$ are isomorphic. 
As $\OO_M$ is isomorphic to $\OO_{G,E}$ (cf.\ \cite[Theorem 10.15]{EP}), the result follows.
\end{proof}

\section{Examples}\label{examples}

\subsection{} \label{71}
Assume that a discrete group $G$ acts on a nonempty set $S$ and that
$\varphi$ is a $G$-valued cocycle for $G\act S$, so we have
\begin{equation}\label{S-cocy-eq}
\varphi(gh, x) = \varphi(g, h\cdot x)  \varphi(h, x) \righttext{for all} g, h \in G,x \in S.
\end{equation}
As in \cite[Example 3.3]{EP}, we may regard $G$ as acting on the graph $E_S$ that has one single vertex and $S$ as its edge set (so $E_S$ is bouquet of loops).
The cocycle $\varphi$ for $G\act S$ is then automatically a cocycle for $G \act E_S$, which we also denote by $\varphi$. We may then form the Toeplitz algebra $\ty$ and the Cuntz-Pimsner algebra $\oy$. Since $E_S$ is sourceless,  it follows from \corref{alt EP alg} that $\oy$ is isomorphic to the Exel-Pardo algebra $\OO_{G, E_S}$ whenever $S$ is finite. Moreover, an important motivation in \cite{EP} is that if $(G,S)$ is a self-similar group,  then $\OO_{G, E_S}$ is isomorphic to the $C^*$-algebra $\OO(G,S)$ introduced in \cite{Nek09}. Similarly,  the $C^*$-algebra $\mathcal{T}(G,S)$ studied in \cite{lrrw} is easily seen to be isomorphic to $\TT_{\yphi}$ in this case.

For completeness, we include some comments on self-similar groups (sometimes called self-similar actions) in the terminology of this paper. 
 Given an action $G\act S$ as above and a $G$-valued cocycle $\varphi$ for $G\act S$, let $S^*$ denote the set of all finite words in the alphabet $S$ and let $\varnothing \in S^*$ denote the empty word. 
One may then inductively extend the action of $G$ on $S$ to an action of $G$ on $S^*$ and $\varphi$ to a cocycle for $G\act S^*$, also denoted by $\varphi$, such that $g\cdot \varnothing = \varnothing$, $\varphi(g,\varnothing) = g$ and
\begin{equation}\label{SS1}
g\cdot(vw)=(g\cdot v)\bigl(\varphi(g,v)\cdot w\bigr),
\end{equation}
for all $g\in G$ and $v,w\in S^*$. We refer to \cite[Lemma 5.1]{Law08} for a proof. Alternatively, we note that this is just a special case of \cite[Proposition 2.4]{EP} if  one identifies $S^*$ with the set of finite paths on $E_S$.

If $S$ is finite and the action  $G \act S^*$ is faithful, then equation (\ref{SS1}) says that the pair $(G,S)$ is a self-similar group in the sense of  \cite{Nek05, Nek09} (see also \cite{lrrw}).  
(Note that $\varphi(g,v)$ is denoted by $g_{\mid v}$ in these references.) Conversely, assume that $(G,S)$ is a self-similar group, that is, $S$ is a nonempty finite set,  a faithful action of $G$ on $S^*$ fixing the empty word is given and $\varphi\:G\times S^*\to G$ is a map such that $\varphi(g,\varnothing) = g$ and (\ref{SS1}) holds. Then it can be shown (see \cite[Section 1.3]{Nek05})
that $\varphi$ is a cocycle for $G\act S^*$ and that $G\act S^*$ restricts to an action of $G$ on $S$. In particular, the restriction of $\varphi$ to $G\times S$ is a $G$-valued cocycle for  $G\act S$. 

As pointed out in \cite{Law08}, see also \cite{LW14},  it appears that self-similar (actions of) groups in a generalized sense were already considered in the 1972 thesis of Perrot, without assuming finiteness of $S$ or faithfulness of $G \act S^*$. Considering $S^*$ as the free monoid on a given set $S$, the key issue in Perrot's work is the existence of a left action $(g,w) \to g\cdot w$ of $G$ on $S^*$ such that $\varnothing$ is fixed, and of a right action $(g,w) \to \varphi(g,w)$ of $S^*$ on $G$ such that (\ref{SS1}) holds. It follows from \cite{Law08} (see in particular subsection 5.1) that this happens if and only if there exist an action of $G$ on $S$ and a $G$-valued cocycle for this action. This setting is precisely the one that is generalized in the work of Exel and Pardo.

\subsection{} A natural class of examples of $\Z$-valued cocycles for actions of $\Z$ on finite sets,  
related to the work of Katsura in \cite{Kat08} (see also \cite[Example 3.4]{EP}), is as follows. 
Let $a\in\N$ and $b\in\Z$.
For any $m\in\N$ and $k\in \Z_a$,
let $\varphi_{a,b}(m,k)\in\Z$ and $\sigma_{a,b}(m,k)\in\Z_a$ be the unique numbers satisfying
\[
bm+k=\varphi_{a,b}(m,k)a+\sigma_{a,b}(m,k).
\]
It is well-known that  
$\sigma_{a,b}\:\Z\times\Z_a\to\Z_a$ is the action of $\Z$ on  $\Z_a$ given by
\[
\sigma_{a,b}(m,k)=bm+k \mod a
\]
and that $\varphi_{a,b}\:\Z\times \Z_a\to \Z$ is a cocycle for $\sigma_{a,b}$.
We call $\varphi_{a,b}$ an \emph{EPK cocycle}
(for ``Exel-Pardo-Katsura'')
and the triple $(\Z_a,\sigma_{a,b},\varphi_{a,b})$ an \emph{EPK system}.
Clearly,  we have
\[
\sigma_{a,b+\ell a}=\sigma_{a,b}\righttext{for all}\ell\in\Z,
\]
so when $a$ is fixed we really only have $a$ distinct actions
$\sigma_{a,b}$ with $ b=0,1,\dots,a-1$. 
Moreover, for $b,b' \in\Z_a$,
the actions $\sigma_{a,b}$ and $\sigma_{a,b'}$ are conjugate if and only if
$\gcd(a,b)=\gcd(a, b')$,
so 
\[
\{\sigma_{a,d}:\text{$d\in \Z_a$ is either 0 or a positive divisor of $a$}\}
\]
forms a complete set of representatives of conjugacy classes for the actions $\sigma_{a,b}$.
However, as we will see below, something interesting happens with the cocycles.

For $b\in \Z$, writing $b = q a + r$ where $q\in \Z$ and $0 \leq r \leq a-1$,
 we get 
\[
\sigma_{a,b}(1, k) = r + k \mod a.
\]
Set $c = a-r$.
The generating function of $\varphi_{a,b} $ is then given by 
\[
\varphi_{a,b}(1,k)=\begin{cases}q\case k<c\\q+1\case k\ge c.\end{cases}
\]
Thus the signature of the cocycle $\varphi_{a,b}$ is
\[
\sum_{k=0}^{a-1}\varphi_{a,b}(1,k)=qc+(q+1)r=qa+r=b.
\]
Hence, it follows from Lemma \ref{signature} that if $b,b' \in \Z$, then the two EPK-systems $(\Z_a,\sigma_{a,b},\varphi_{a,b})$  and $(\Z_a,\sigma_{a,b'},\varphi_{a,b'})$ are  not cohomology conjugate whenever $b\neq b'$.

If $b$ is relatively prime to $a$, then the action $\sigma_{a,b}$ of $\Z$ on $\Z_a$ is obviously transitive. 
Otherwise,  the EPK-system $(\Z_a,\sigma_{a,b},\varphi_{a,b})$ may  be decomposed as follows. Setting $d= \gcd(a,b) = \gcd(a,r)$ and $a'=a/d$, one finds that there are $d$ orbits
\[
\big\{i+d\Z_a\big\}_{i=0}^{d-1},
\]
each having $a'$ elements.
Set 
 $b'=b/d$ and $r' = r/d $, 
so that $b'$ is relatively prime to $a'$ and
$b'=qa'+r'$ with $0\le r'<a'$.
For each $i=0,\dots,d-1$, the restriction of the cocycle $\varphi_{a,b}$ to the orbit $i+d\Z_a$
has generating function given by
\[
k\mapsto\begin{cases}q\case i+kd<(a'-r')d\\q+1\case i+kd\ge (a'-r')d.\end{cases}
\]
A quick computation shows that  the inequality $i+kd<(a'-r')d$ is equivalent to
$
k<(a'-r')
$ for each $i=0,\dots,d-1$. 
Thus this restricted cocycle has signature
\[
(a'-r')q+r'(q+1)=a'q+r'=b'.
\]
Since the cocycle $\varphi_{a',b'}$ for the transitive action $\sigma_{a',b'}$ of $\Z$ on $\Z_{a'}$ also has signature $b'$, we conclude from Corollary \ref{transitive} that
the restriction of the action $\sigma_{a,b}$ and the cocycle $\varphi_{a,b}$ to the orbit $i+d\Z_a$
is cohomology conjugate to the EPK system $(\Z_{a'},\sigma_{a',b'},\varphi_{a',b'})$.
(In fact, a routine computation shows that
the map
$k\mapsto i+kd$
transports the system $(\Z_{a'},\sigma_{a',b'},\varphi_{a',b'})$ to
the restriction of the action $\sigma_{a,b}$ and the cocycle $\varphi_{a,b}$ to $i+d\Z_a$.)
In this way,
we see that the EPK system
$(\Z_a, \sigma_{a,b},\varphi_{a,b})$ is cohomology conjugate with the system obtained from pasting $d$ disjoint copies of 
the transitive system
$(\Z_{a'},\sigma_{a',b'},\varphi_{a',b'})$.

More generally, let us now consider  a bijection  $\sigma$ of $\Z_a$
and let  $\xi\:  \Z_a \to \Z$. We get an action of
$\Z$ on $\Z_a$  by setting  $m\cdot k = \sigma^m(k)$ and we may then form the $\Z$-valued cocycle $\varphi$ determined by $\xi$ with respect to this action. Letting
 $E_{\Z_a}$ be the graph having one vertex and $\Z_a$ as its edge set, we get an action of $\Z$ on  $E_{\Z_a}$ and we may regard $\varphi$ as a cocycle for $\Z\act E_{\Z_a}$.   The Cuntz-Pimsner algebra $\OO_{Y^\varphi}$  is then the universal unital $C^*$-algebra generated by  Cuntz isometries $s_0, \ldots, s_{a-1}$ 
 and a unitary $u$ satisfying the relations
 \[
 us_k = s_{ \sigma(k)}u^{\xi(k)},\quad k=0, 1, \ldots, a-1.
 \]
Indeed, using these relations, one computes readily that\[
u^ms_k= s_{\sigma^m(k)} u^{\varphi(m,k)}
\]
for $m\in \Z$ and $k\in \Z_a$, and these are precisely the relations for the associated Exel-Pardo algebra.

 To ease notation, when $m \in \Z$ and $m = qa + r$ for $q\in \Z$ and $0\leq r \leq  a-1$, we will write $ q = m|a$ and $[m]_a = r $. 
If $b\in \Z$ is given, and  we let
$\sigma\:\Z_a\to \Z_a$ be 
defined by
$
\sigma(k) = [b+k]_a
$
and $\xi\:\Z_a\to \Z$ be given by
$
\xi(k) = (b+k)|a,
$
then the associated action of $\Z$ on $\Z_a$ is $\sigma_{a,b}$, while $\varphi = \varphi_{a,b}$. Hence $\OO^{a,b}:=\OO_{Y^{\varphi_{a,b}}}$  is the universal unital $C^*$-algebra generated by  Cuntz isometries $s_0, \ldots, s_{a-1}$ 
 and a unitary $u$ satisfying the relations
 \[
 us_k = s_{[b+k]_a}u^{(b+k)|a},\quad k=0, 1, \ldots, a-1.
 \]
This gives, for example, $\OO^{a, 0} \simeq \OO_a = C^*(E_{\Z_a})$ (in accordance with the fact that $\varphi_{a,0}(m,k) = 0 $ for all $m\in \Z$ and $k\in \Z_a$),
and
 $\OO^{a,a}= \OO_a\otimes C(\mathbb{T})\simeq C^*(E_{\Z_a})\rtimes_{\rm id}\Z$ (in accordance with the fact that $\varphi_{a,a}(m,k) = m $ for all $m\in \Z$ and $k\in \Z_a$).   
More interestingly,  $\OO^{2,1}$ is the universal unital $C^*$-algebra generated by two Cuntz isometries $s_0, s_1$
and a unitary $u$ satisfying the relations
\[
us_0 = s_1,\quad us_1 = s_0u.
\]
It is then not difficult to see that $\OO^{2,1}$ is the universal unital $C^*$-algebra generated by an isometry $s_0$
and a unitary $u$ satisfying the relations
\[
u^2s_0 = s_0u,\quad s_0s_0^* + us_0s_0^*u^* = 1,
\]
that is, $\OO^{2,1} \simeq \mathcal{Q}_2$, where $\mathcal{Q}_2$ is the $C^*$-algebra studied in
\cite{LarLi2adic}
(see also references therein).
As mentioned in \cite{LarLi2adic} (right after Remark 3.2), 
$\mathcal{Q}_2$ is 
isomorphic to the $C^*$-algebra $\OO(E_{2,1})$ considered in 
\cite[Example~A.6]{Ka4}.
In fact, we have $\OO^{a,b}\simeq \OO(E_{a,b})$ in general, where $E_{a,b}$ denotes the topological graph defined in \cite[Example~A.6]{Ka4}; this follows readily from the description of $\OO(E_{a,b})$ given on page 1182 of \cite{Ka4}.

\subsection{}The class of EPK-systems may be put in a general framework. Let us first remark that if a discrete group $G$ acts on a set $S \neq \varnothing$, 
$\varphi$ is a $G$-valued cocycle for $G\act S$, and $\tau$ is an endomorphism of $G$, then we may define another action $\cdot'$ of $G$ on $S$ by setting $$g\cdot'x = \tau(g) \cdot x$$ and a $G$-valued cocycle $\varphi_\tau$ for this action by setting $$\varphi_\tau(g,x)= \varphi(\tau(g), x),$$ as is easily verified. 

Next, let $\rho$ be an injective endomorphism of a discrete group $G$ and set $H=\rho(G)$. To be interesting for what follows, $G$ should be infinite and $\rho$ should not be surjective. Choose a set $S_\rho$ of coset representatives for $G/H$ containing $e$. For each $g\in G$, let $s(g)$ denote the unique element of $S_\rho$ satisfying $s(g)H = gH$. For $g\in G$ and $x\in S_\rho$,  set 
\begin{gather*}
g\cdot x = s(gx),
\\
\varphi(g,x) = \rho^{-1}(s(gx)^{-1} gx).
\end{gather*}
Since $s(gx)H = gxH$, we have $s(gx)^{-1} gx \in H=\rho(G)$, so $\varphi(g,x)$ is well-defined and lies in $G$.
It is then not difficult to check that this gives an action of $G$ on $S_\rho$,  that $\varphi$ is a cocycle for this action and that this construction does not depend on the choice of coset representatives for $G/H$, up to cohomology conjugacy.
Such a construction appears in \cite[Example 2.2]{lrrw} in the case where $G=\Z^n$ for some $n\in \N$ and $\rho\:\Z^n \to \Z^n$ is of the form $\rho(m) = Am$  for some $A\in M_n(\Z)$ with $|\det A\, | > 1$, in which case $S_\rho$ is finite with $|S_\rho| = |\det A \,| $.  

Now, let $\tau$ be another  endomorphism of $G$. We then get an action  $\cdot'$ of $G$ on $S_\rho$ and a $G$-valued cocycle $\varphi_\tau$ for this action, given by
\begin{gather*}
g\cdot'x  = s\big(\tau(g)x\big),
\\
\varphi_\tau(g,x) =  \rho^{-1}\Big(s\big(\tau(g)x\big)^{-1} \tau(g)x\Big)
\end{gather*}
for $g\in G$ and  $x\in S_\rho$.    

For example, let $G=\Z$, $a\in \N$ ($a\geq 2$) and $b\in \Z$,  set $\rho(m) = am$ and $ \tau(m)=bm$ for $m\in \Z$,  and choose $S_\rho=\Z_a$. Then the action $\cdot'$ of $\Z$ on $\Z_a$ is equal to $\sigma_{a,b}$ and  $\varphi_\tau$ is equal to $\varphi_{a,b}$, so we recover the EPK-system associated with $a$ and $b$.  When $G=\Z^n$, one may similarly consider $\rho$ associated with some $A \in M_n(\Z)$ ($|\det A\,| > 1$) and $\tau$ associated with some $B\in M_n(\Z)$. 

\subsection{}

Triples $(E, G, \varphi)$ where $G$ is a discrete group acting on a directed graph $E$, in the trivial way on $E^0$, might be produced as follows:
\begin{itemize} \item Pick a directed graph $E$ and a discrete  group $G$.
\item Let $G$ act  trivially on $E^0$.
\item For each $v,w\in E^0$, set ${}_{v}E^1_w=\{ e\in E^1: r(e) = v, s(e) = w\}$. Note that 
$E^1$ is the disjoint union of all these sets. 
\item Set $R_E= \{ (v,w) \in E^0\times E^0: {}_{v}E^1_w \neq \varnothing\}$. 
\item For each $(v,w) \in R_E$, pick an action of $G$ on ${}_{v}E^1_w$ and a cocycle ${}_{v}\varphi_w$ for this action. 
\item Paste these actions and these cocycles together to obtain an action of $G$ on $E^1$ and a cocycle $\varphi$ for it. 
\end{itemize}
Since $G$ acts  trivially on $E^0$, it is clear that we get an action of $G$ on the graph $E$ and that $\varphi$ is a cocycle for this action. Moreover, it is easy to see that 
if ${}_{v}\varphi'_w$ is  also a cocycle for the chosen action of $G$ on ${}_{v}E^1_w$ for each $(v,w) \in R_E$, then the resulting cocycle $\varphi'$ will be cohomologous to $\varphi$ if and only if ${}_{v}\varphi'_w$ is cohomologous to ${}_{v}\varphi_w$ for each $(v,w) \in R_E$.

To illustrate this procedure, set $G=\Z$ and let $E$ be a  directed graph
such
that the number 
$A(v, w)$ of edges 
in ${}_{v}E^1_w$ is finite for all $v,w \in E^0$.
Note that this hypothesis is much weaker than requiring that $E$ be \emph{locally finite} in the sense that each vertex only receives and emits finitely many edges. Let $B\:E^0\times E^0\to \Z$ be a map. For each $(v,w) \in E^0\times E^0$ such that $A(v,w) \geq 1$, i.e., for each $(v,w)\in R_E$,  we may choose a bijection from $\Z_{A(v,w)}$ onto ${}_{v}E^1_w$ and use it to transfer the EPK-system associated with the pair $A(v,w), B(v,w)$ into an action  of $\Z$ on ${}_{v}E^1_w$ and a cocycle for this action. Using these choices in the construction outlined above, we obtain an action of $\Z$ on $E$ fixing all vertices and a cocycle
$\varphi_B$ for this action. Let $B_E\:R_E\to\Z$ denote the restriction of $B$ to $R_E$. Note that if $C\:E^0 \times E^0\to \Z$ is any other map such that $B_E \neq C_E$, then it follows from our previous analysis of EPK-systems that the systems $(E,\Z,\varphi_B)$ and $(E,\Z,\varphi_{C})$ are not cohomology conjugate. Note also that if $E$ is a 
countable row-finite
graph with no sources, then we just get the class of $C^*$-algebras $\mathcal{O}_{A,B}$ introduced by Katsura in \cite{Kat08},  as presented in \cite[Example~3.4]{EP} 
when $E$ is finite with no sources. 

As a concrete example, let $a \in \N$ and consider the graph $E$ given by $E^0=\Z$, $E^1 = \Z_a \times \Z$, $r(t,j) = j-1$, and $ s(t,j) = j $ for $(t,j) \in E^1$, so that $A(i,j) = a$ when $i= j-1$ and is zero otherwise. Only the coefficients  $B(j-1,j)$ along the first subdiagonal of $B$ will then matter. 
In this example,  $E$ is row-finite with no sources, so it will give one of Katsura's $\mathcal{O}_{A,B}$. But it can easily be changed so that $E$ is not row-finite with no sources (for example by adding one edge $e_j$ (or more) going from 0 to $j$ for each $j\in \Z$), but still satisfies the requirement that $|A(i,j)| < \infty$  for all $i, j\in \Z=E^0$).

\subsection{}\label{strings}
Consider again a triple $(S, G, \varphi)$ where a discrete group $G$ acts on a set $S$ and $\varphi$ is a cocycle for this action. Pick any symbol $\omega \not\in S$. Let then $F=F_S$ be the directed graph where  $F^0 = S \cup \{\omega\}$, $F^1= S$, and $ r, s\:F^1\to F^0$ are given by
\[
r(x) = x,\quad s(x) = \omega\righttext{for} x \in F^1=S.
\]
Obviously, $F$ has exactly one source, namely $\omega$. (If $S$ is finite, $F$ may be thought of as a bouquet of $|S|$ disjoint strings (that are not loops) emanating from $\omega$.) The action of $G$ on $S$ induces a natural action of $G$ on $F$ in an obvious way: we just set  $g\omega = \omega$ for all $g\in G$,  and let $G$ act on $F^0\setminus \{\omega\} = S$ and on $F^1=S$ via its given action on $S$.  The cocycle $\varphi$ is then a cocycle for the action of $G$ on $F$: the first condition is automatically satisfied (since $F^1=S$);
because
\[
\varphi(g,x) s(x) = \varphi(g,x) \omega = \omega =g\omega = g s(x)
\]
for all $g\in G$ and $x\in F^1=S$,
the second condition is trivially satisfied.

\subsubsection*{Special case}

Set $S=G$ and let $G$ act on itself by left translation.
As the map $\id\: F^1=G \to G$ 
trivially satisfies condition (2) in 
\propref{g cby} (and the assumption in this proposition is fulfilled), we get that the cocycle $(g,e) \to g$ is a coboundary, i.e., it is cohomologous to the cocycle $(g,e)\to 1$. Hence we conclude that the correspondences associated to these cocycles are isomorphic.
For the first of these cocycles, it follows from \remref{varphi trivial} that
we have
\[
\oy = \OO_{X_F\rtimes G}\simeq C^*(F)\rtimes G,
\]
which is frequently not isomorphic to $C^*(F)$.
As an explicit example,
consider the cocycle $\varphi(g,d)=g$ for the action $\Z_2\act \Z_2$ by translation.
Since 
any action of $\Z_2$ on $C^*(F)=M_3$ is inner, we get
\begin{align*}
\OO_{\yphi} &
\simeq C^*(F)\rtimes \Z_2
\simeq M_3\rtimes \Z_2\\
&\simeq M_3\otimes \C^2
\simeq M_3\oplus M_3 \\ 
&\not\simeq M_3,
\end{align*}
and we obtain 
the same $C^*$-algebra for the cocycle $\varphi=1$.

This is in contrast to the situation in
\cite[Example~3.6]{EP},
where the graph $E$ is finite and has no sources,
and the action 
fixes the vertices; 
Exel and Pardo then show that for
the cocycle $\varphi=1$ we have
$\oy\simeq C^*(E)$,
because the unitaries $u_g$ for $g\in G$ can be expressed in terms of the partial isometries $s_e$ for $e\in E^1$.
Note that Exel and Pardo's observation does not apply to the graph $F$ above simply because $F$ has a source, namely $\omega$.

\subsection{}
A more general construction in the same vein as the one in \ref{strings} is as follows. Let $(S, G, \varphi)$ be as in \ref{strings}. Assume that we are also given an action of $G$ on a nonempty set $I$ and a $G$-equivariant map $\rho\:S\to I$. Pick a symbol $\omega \not\in I$ and let $F$ be the directed graph where $F^0 = I \cup \{ \omega\}$, $F^1=S$ and $ r, s\:F^1\to F^0$  are given by 
\[
r(x) = \rho(x),\quad s(x) = \omega\righttext{for} x \in F^1=S.
\]
The two actions of $G$ induce a natural action of $G$ on $F$ by setting  $g\omega = \omega$ for all $g\in G$ and letting  $G$ act on $F^0\setminus \{\omega\} = I$ and on $F^1=S$ via the given actions of $G$ on $I$ and $S$, respectively. The cocycle $\varphi$ is then again a cocycle for the action of $G$ on $F$.

In this example, all edges of $F$ have the same source $\omega$, which is a source for $F$, and all vertices different from $\omega$ are sinks for $F$, undoubtedly a somewhat special situation.
Next we define a similar class of examples, but without sinks.

Assume that $G$ also acts on a nonempty set $T$ and pick  a symbol $\omega \not\in S\cup T$. 
 Let  then $K = (K^0, K^1, r, s)$ be the directed graph where
 \[
 K^0 =   S \cup \{\omega\},\quad K^1= S\times (T \cup\{\omega\})
 \]
 and $ r, s\:K^1\to K^0$  are given by
 \begin{gather*}
 r(x, \omega) = x,\quad s(x,\omega) = \omega,
 \\
 r(x, y) = x = s(x,y)
 \end{gather*}
for $x \in S$ and $ y \in T.$ 
Define an action of  $G$ on $K$ as follows: 
\begin{itemize}
\item $G$ acts on $K^0\setminus \{\omega\} = S$ via the given action of $G$ on $S$,
\item $g\omega = \omega,$ 
\item $ g(x, \omega) = (gx, \omega)$ 
\item $ g(x,y) = (gx, gy)$ 
\end{itemize}

for $g \in G$, $x\in S$, and  $y \in T$.
Moreover, define $\widetilde{\varphi}\: G\times K^1\to G$ by
\begin{align*}
\widetilde{\varphi}\big(g, (x,\omega)\big) &= \varphi(g, x),
\\
\widetilde{\varphi}\big(g, (x,y)\big)&= g
\end{align*}
for $g \in G$, $x\in S$, and $ y \in T$. 
Then $\widetilde{\varphi}$ is a cocycle for $G \act K$ that is not cohomologous to the trivial cocycle if $\varphi$ is not  cohomologous to the trivial cocycle for $G \act S$.

This graph  still has one source, namely $\omega$. To obtain a system with a graph having no sources, one can for example add one loop  (or more) at $\omega$, let $G$ act on this loop (or these loops) by fixing it (or them), and set $\varphi(g, e) = g$ for all $g$ when $e$ is this loop (or any of these loops).

\subsection{}
In \cite[Example~2]{Ka2}, Katsura constructs a topological graph
from a locally compact Hausdorff space $S$
and 
a homeomorphism $\sigma\:S\to S$.
The associated topological graph $E_\sigma$ has
$E_\sigma^0=E_\sigma^1=S$, 
$s=\id_S$, and $r=\sigma$.
The main point of this class of examples of topological graphs is the natural isomorphism
\[
C^*(E_\sigma)\simeq C_0(S)\rtimes_\alpha \Z,
\]
where $\alpha$ is the associated action of $\Z$ on $C_0(S)$.

Actions of $\Z$ on the topological graph $E_\sigma$ are in 1-1 correspondence with homeomorphisms $\tau\:S\to S$ that commute with $\sigma$,
via
$n\cdot x=\tau^n(x)$
for $n\in\Z,x\in S$.
We can regard a cocycle $\varphi$ for such an action as a continuous map $\varphi\:\Z\times S\to \Z$,
and the generating function of $\varphi$ as a continuous map $\xi\:S\to \Z$ satisfying
\[
\xi(x)-1\in S_x:=\{k\in\Z:\tau^k(x)=x\},
\]
so that $\xi(x)$ is congruent to 1 modulo the period of the orbit $\Z\cdot x$ (where by convention the period is defined to be 0 if the orbit is free, in which case $\varphi(n,x)=n$ for all $n\in\Z$).

\subsection{}

Assume that  $H$ is a discrete group acting by homeomorphisms on a locally compact space Hausdorff  space $E^0$. Set $E^1= H\times E^0$ and define $r,s\:E^1\to E^0$ by 
$$r(h,x) = h\cdot x, \quad s(h,x) = x$$
for all $(h,x) \in E^1$. This gives a topological graph $E$. Note that  $C^*(E)$ is in general not isomorphic to  $C_0(E^0) \rtimes H$. (For example, if $H$ is finite and abelian, $E^0$ is finite and the action of $H$ on $E^0$ is trivial, then $C_0(E^0) \rtimes H\simeq C(E^0) \otimes C^*(H)$ is abelian, while $C^*(E)$ is the direct sum of $|E^0| $ copies of the Cuntz algebra $\OO_{|H|}$). 

Now, assume that a discrete group $G$ also acts on $E^0$ by homeomorphisms and that this action commutes with the action of $H$. We may then define an action of $G$ on $E^1$ by 
$$ g\cdot(h,x) = (h, g\cdot x)$$
One easily verifies that this gives an action of $G$ on $E$. 

Let $\phi\:G\times E^0 \to G$ be a cocycle for $G\curvearrowright E^0$ satisfying
$$\phi(g,x)\cdot x = g\cdot x$$
for all $(g,x)\in G\times E^0$. Then the map $\varphi\: G\times E^1\to G$ defined by $$\varphi\big(g,(h,x)\big) = \phi(g, h\cdot x)$$
is a cocycle for the action of $G$ on $E$. 
Indeed, since the actions of $G$ and $H$ on $E^0$ commute, we have
\begin{align*} 
\varphi\big(g_1g_2, (h,x)\big) &= \phi(g_1g_2, h\cdot x) \\
&= \phi\big(g_1, g_2\cdot(h\cdot x)\big) \phi(g_2, h\cdot x)\\
&= \phi\big(g_1, h\cdot(g_2\cdot x)\big) \phi(g_2, h\cdot x)\\
&= \varphi\big(g_1, (h, g_2\cdot x)\big) \varphi\big(g_2, (h,x)\big)\\
&=\varphi\big(g_1, g_2\cdot(h,x)\big) \varphi\big(g_2, (h,x)\big)
\end{align*}
for all $g_1, g_2 \in G, (h,x)\in E^1$, and
\begin{align*} 
\varphi\big(g, (h,x)\big)\cdot s(h,x)&= \phi(g, h\cdot x)\cdot x \\
&= h^{-1}\cdot\big(\phi(g, h\cdot x)\cdot (h\cdot x)\big)\\
&= h^{-1}\cdot\big(g\cdot(h\cdot x)\big) = g\cdot x\\
&= g\cdot s(h,x)
\end{align*} 
for all $g\in G, (h,x)\in E^1$.

Note that if the action of $G$ on $E^0$ is free, then $\phi$ has to be the trivial cocycle $(g, x) \to g$ for the action $G\act E^0$, so $\varphi$ can only be the trivial cocycle for $G\act E$. A simple example where the action $G\act E^0$ is not free is as follows. Set $E^0 = \T$, $G=H=\Z$, pick  $\lambda, \mu \in \T$ such that $\lambda$ has period $p$, and $k\in \Z$. Let $G\act E^0$ (resp.\ $H \act E^0$) be given by $(m,z) \to \lambda^m z$ (resp.\ $(n,z) \to \mu^n z$) and define $\phi: G\times E^0\to G$ by $\phi(m, z) = (1 + kp)m$. Then $G\act E^0$ is not free and all the required conditions are easily verified. Note that the cocycle $\varphi$ we get for the action of $G$ on $E$ is simply given by
$\varphi(m, (n,z)) = (1 + kp)m$. It would be interesting to know whether more exotic examples can be produced.  

\subsection{}
Assume that a discrete group $G$ acts on a nonempty set $S$ and that
$\varphi$ is a $G$-valued cocycle for $G\act S$. We recall from \subsecref{71} that  $G \act S$ extends to an action of $G$ on $S^*$, where $S^*$ denotes the set of words on the alphabet $S$, and that $\varphi$ extends to a cocycle for $G\act S^*$, also denoted by $\varphi$.

As Nekrashevych \cite{Nek09} points out in the case of a self-similar group, see also  \cite[Section~2]{lrrw}, $S^*$ may be used to build a directed rooted tree $T$ (sometimes called an arborescence),
with the empty word
$\varnothing$
as the root,
and with vertex set $T^0=S^*$
and edge set
\[
T^1=\{(w,wx):w\in S^*,x\in S\}.
\]
In view of our conventions (which in this respect conform to those of \cite{EP}),
namely that paths in a directed graph should go from right to left,
we dictate that an edge $(w,wx)$ has source $wx$ and range $w$. Since $G$ acts on $S^*=T^0$, we clearly get an action of $G$ on $T$ when we define $G\act T^1$ by setting $$g\cdot(w,wx)= \big(g\cdot w, g\cdot(wx)\big)$$
 for $g\in G, w\in S^*$ and $x\in S$. We can also define a map $\varphi\:G\times T^1 \to G$ by $$\varphi\big(g, (w,wx)\big) = \varphi(g, wx)$$
  for $g\in G, w\in S^*$ and $x\in S$. It is then straightforward to check that $\varphi$ is a cocycle for $G\act T^1$. To become a graph cocycle for $G\act T$, $\varphi$ must satisfy 
  \begin{equation} \label{CT1} 
   \varphi\big(g, (w, wx)\big)\cdot (wx) = g\cdot (wx) \quad \text{ for all } g\in G, w\in S^*, 
   x\in S,
     \end{equation}
  that is,
  \begin{equation} \label{CT2} 
   \varphi(g,wx)\cdot (wx) = g\cdot (wx) \quad \text{ for all } g\in G, w\in S^*, 
   x\in S.
  \end{equation}
In particular, $\varphi$ must then satisfy 
  \begin{equation} \label{CT3}
  \varphi(g,x)\cdot x = g\cdot x \quad    \text{ for all } g\in G \text{ and } x \in S.
  \end{equation}
If $G\act S$ is free, then \eqref{CT3} only holds when $\varphi$  is the trivial cocycle $(g,x) \mapsto g$ for $G\act S$, hence the trivial cocycle is the only possible one for $G\act T$. 
Interestingly, this is also the case if we assume that the action $G\act S^*$ is faithful and, instead of \eqref{CT1}, we impose the stronger Exel-Pardo vertex condition $\varphi(g,e)\cdot v=g\cdot v$ for all $g\in G, e\in T^1$, and $v\in T^0$.
It is not difficult to construct examples where $G\act S$ is not free and there exist cocycles $\varphi$ for $G \act S$ that satisfy \eqref{CT3} and are different from the trivial cocycle. 
Conceivably, there might exist cases
 where such cocycles  satisfy \eqref{CT2}, that is, give cocycles for $G\act T$, 
 but we don't know of any concrete example.



\providecommand{\bysame}{\leavevmode\hbox to3em{\hrulefill}\thinspace}
\providecommand{\MR}{\relax\ifhmode\unskip\space\fi MR }
\providecommand{\MRhref}[2]{%
  \href{http://www.ams.org/mathscinet-getitem?mr=#1}{#2}
}
\providecommand{\href}[2]{#2}

\end{document}